\documentclass[preprint,12pt]{elsarticle}
\usepackage{amsmath,amssymb,amsthm}
\usepackage{hyperref}
\usepackage{titlesec}
\usepackage{tikz}
\usepackage{xcolor}
\usetikzlibrary{arrows.meta, positioning, fit, backgrounds}
\titleformat{\paragraph}[runin]{\normalfont\bfseries}{}{0em}{}[]

\definecolor{factionBlue}{RGB}{37, 99, 235}
\definecolor{factionMagenta}{RGB}{217, 70, 239}
\definecolor{factionTeal}{RGB}{13, 148, 136}
\definecolor{factionGold}{RGB}{202, 138, 4}
\definecolor{factionViolet}{RGB}{124, 58, 237}

\newtheorem{theorem}{Theorem}
\newtheorem{corollary}{Corollary}
\newtheorem{lemma}{Lemma}

\theoremstyle{definition}
\newtheorem{definition}{Definition}

\newcommand{\gos}{\mathrm{gos}}
\newcommand{\Rsize}[1]{|R(#1)|}
\newcommand{\Rmat}{R}
\makeatletter
\def\ps@pprintTitle{\let\@oddhead\@empty \let\@evenhead\@empty
  \def\@oddfoot{\hfil\thepage\hfil}\let\@evenfoot\@oddfoot}
\makeatother

\begin{document}

\begin{frontmatter}

\title{Termination of Binary Trust-Gossip Dynamics:\\A Constructive No-Limit-Cycles Theorem}
\author{Nicholas Boichuk}
\address{Independent Researcher}
\ead{nicholas.boichuk3@nhs.net}

\begin{abstract}
In the binary trust-gossip dynamics, $n$ agents each hold a directed binary opinion, trust or distrust, of every other agent, and a gossip step lets one agent copy another agent's opinion of a third whenever the copier trusts the source. We prove that under any fair schedule of such steps, every trajectory reaches an absorbing state in finitely many steps, one in which no gossip step changes any opinion; in particular, there are no limit cycles. The proof is constructive and rests on a single descent measure: the number of ordered pairs of distinct agents linked by a chain of trust. Trust-adding gossip leaves this count unchanged and trust-removing gossip can only lower it, so it never rises, and it is zero only at the all-distrust state. From any non-absorbing state we exhibit a finite run of steps that reaches an absorbing state or strictly lowers the count; since the count is a non-negative integer at most $n(n-1)$, the process halts at an absorbing state.
\end{abstract}

\begin{keyword}
opinion dynamics \sep gossip \sep absorbing states \sep limit cycles \sep convergence \sep termination
\end{keyword}

\end{frontmatter}


\section{Introduction}

The companion paper \cite{absorbing} introduced the binary trust-gossip dynamics on $n$ agents, characterized its absorbing states completely, and counted them. To briefly describe the model: each agent holds a directed binary opinion (``trust'' or ``distrust'') of every other agent, encoded as a binary matrix $M$ in which the entry $M_{ij}$, written as $i(j) \in \{0, 1\}$, is agent $i$'s opinion of agent $j$. Each row therefore holds one agent's opinions of all the others, and the matrix is directed: $i(j)$ and $j(i)$ are independent. The diagonal entries of self-opinion $i(i)$ are left undefined throughout, following the convention of \cite{absorbing}. A gossip event $\gos(M, a(y), z)$ --- written $\gos(M, a, z, y)$ in \cite{absorbing} --- where $a, z, y$ are distinct agents, has the listener $z$ adopt the speaker $a$'s opinion of the target $y$ (where $a(y)$ is $a$'s opinion of $y$), if $z$ trusts $a$:
\begin{equation}
z(y) := a(y), \quad \text{provided } z(a) = 1.
\end{equation}
A state is \emph{absorbing} if no gossip event can change it, equivalently if for every triple $(a, z, y)$ of distinct agents, either $z(a) = 0$ or $z(y) = a(y)$ (no agent trusts another agent whom they disagree with).

The main results of the companion paper concerned the structure and count of these absorbing states. The Characterization Theorem of \cite{absorbing} establishes a bijection between absorbing states on $[n]$ and pairs consisting of a partition of $[n]$ with a non-empty subset of each block highlighted as ``core''. Enumerating these yields the OEIS sequences A143405 (labeled count) and A000219 (unlabeled count, the plane partitions); exhaustive enumeration confirms both counts for $n \leq 7$.

The companion paper also reports an empirical observation: for $n \leq 4$, exhaustive construction of the state-transition graph finds that every initial state reaches an absorbing state, with no limit cycles detected. This paper proves that observation holds for all $n$.

The descent argument here has a long lineage. That a game on a graph must halt because some integer quantity strictly drops is the principle behind
the chip-firing game of Bj\"orner, Lov\'asz, and Shor~\cite{bls}, its
directed version~\cite{bl-digraph}, and Dhar's abelian
sandpile~\cite{dhar}. Bond and Levine~\cite{bond-levine} showed one
commutativity axiom suffices: an abelian network halts, and the number
of moves is fixed, independent of their order. However, our binary trust-gossip is
not abelian: a gossip move overwrites the listener's opinion rather than
adding to it, and fires only when the listener trusts the speaker and
the two disagree, so different fair schedules reach different absorbing
states~\cite[Fig.~2]{absorbing}. The abelian halting theorem therefore does not
apply, and $|R(M)|$ is the monovariant we put in its place.

The model itself belongs to the domain of opinion dynamics, where the classical models
are continuous and average: DeGroot's pooling~\cite{degroot},
Hegselmann--Krause bounded confidence~\cite{hk}, and the gossip
algorithms of Boyd et al.~\cite{boyd}, in which gossip means pairwise
averaging toward a common mean. The voter
model~\cite{clifford-sudbury,holley-liggett} is binary but copies a
neighbor with no trust gating and settles at consensus. Coevolving models
such as DeGroot--Friedkin~\cite{jia} let the network change but keep
opinion and influence apart. Binary trust-gossip differs on each count:
a single binary matrix that is at once the opinions and the trust gating
their spread, updated by overwriting, terminating at the core--periphery
states of~\cite{absorbing} rather than at consensus.

\paragraph{Sketch of the proof.}
Everything rests on one count: $|R(M)|$, the number of ordered pairs of agents joined by a direct or indirect trust path. Gossip that adds a trust edge creates no new reachability, since the listener adopting trust of a target from a speaker could, by construction, already reach the target through the speaker, while gossip that removes an edge can only destroy reachability, so $|R(M)|$ never rises (Section~\ref{sec:basic}). The proof shows that from every non-absorbing state, some finite run of gossip either reaches an absorbing state or strictly lowers $|R(M)|$. Since $|R(M)|$ is a non-negative integer, the descent can repeat only a finite number of times. We fully enumerate the possible arrangements of the strongly-connected clusters of the reachability graph (Section~\ref{sec:dichotomy}). We show that a cluster which reaches a commonly-trusted outsider can be dissolved, strictly decreasing $|R(M)|$ (Section~\ref{sec:outward}). A cluster that reaches no outsider is instead completed into a locked faction (a fully-trusting core plus a periphery that copies it) after which there is no possible gossip event which could change their opinions (Section~\ref{sec:post-processing}). The agents left outside these factions can be absorbed into a faction they reach as a new peripheral, which lowers $|R(M)|$ (Section~\ref{sec:zipping}). Then, existence turns into inevitability: we only show that one finite path exists from every non-absorbing state to an absorbing one, and a fair schedule of firing moves cannot avoid it forever (Section~\ref{sec:fairness}).


\section{Setup and notation}\label{sec:setup}

Throughout this paper, the number of agents $n \geq 3$ is fixed to allow the gossip function to be defined.

\begin{definition}[Firing move; pre-state and post-state]\label{def:firing}
A \emph{firing move} is a gossip event $\gos(M, a(y), z)$ in which $a, z, y$ are distinct, $z(a) = 1$, and $z(y) \neq a(y)$. A firing move changes $M$: we call $M$ before the move the \emph{pre-state} and the result $M'$ the \emph{post-state}.
\end{definition}

As per \cite{absorbing}, a state is absorbing if and only if no firing move is available. We classify each firing move by the direction of the change to $M$:

\begin{definition}[$0 \to 1$ and $1 \to 0$ moves]\label{def:01moves}
A firing move $(a, z, y)$ is a \emph{$0 \to 1$ move} if it adds a direct trust edge from $z$ to $y$; that is, if $z(y) = 0$ and $a(y) = 1$, and therefore the move sets $z(y) := 1$. The move is a \emph{$1 \to 0$ move} if it removes a direct trust edge; that is, $z(y) = 1$ and $a(y) = 0$, therefore the move sets $z(y) := 0$.
\end{definition}

\begin{definition}[Reachability matrix]\label{def:reach} 
$\Rmat(M)$, the reachability matrix, is the transitive closure of $M$. It has $\Rmat_{uv} = 1$ exactly when $M$ contains a directed path $u \to \cdots \to v$ of length $\geq 1$; like $M$, it is defined only off the diagonal. We write $|R(M)|$ for its number of $1$-entries, so $|R(M)| \in \{0, 1, \ldots, n(n-1)\}$.
\end{definition}

\begin{definition}[$\Rmat$-cluster]\label{def:scc}
An \emph{$\Rmat$-cluster} of $M$, or \emph{cluster} for short, is a strongly-connected component of $\Rmat(M)$ with at least two agents: a maximal set $C$ with $|C| \geq 2$ in which every two distinct $u, v$ are mutually reachable ($\Rmat_{uv} = \Rmat_{vu} = 1$). Since $\Rmat_{uv} = 1$ means exactly that $u$ reaches $v$ in $M$, the mutually-reachable pairs are identical for $M$ and $\Rmat$, so the two split the agents into the same strongly-connected components. A lone agent, mutually reachable with no one, forms its own component but is not a cluster.
\end{definition}

\begin{definition}[Flat and progress]\label{def:flat}
A firing move is \emph{flat} if it preserves $|R(M)|$, and \emph{progress} if it strictly decreases $|R(M)|$. Corollary~\ref{cor:R-monotone} will show that these are the only two possibilities, i.e. that $|R(M)|$ is monotone non-increasing under every firing move.
\end{definition}

Figure~\ref{fig:example} shows a small $n = 7$ example state that the later step-by-step figures below refer to.

\begin{figure}[!htb]
\centering
\begin{minipage}[t]{0.48\textwidth}
\centering
\textbf{Trust matrix $M$}\\[0.5em]
{\small\(\begin{pmatrix}
\cdot & 1 & 0 & 0 & 0 & 0 & 0 \\
1 & \cdot & 0 & 0 & 0 & 0 & 0 \\
0 & 0 & \cdot & 1 & 1 & 0 & 0 \\
0 & 0 & 1 & \cdot & 0 & 0 & 0 \\
1 & 0 & 0 & 0 & \cdot & 0 & 0 \\
0 & 0 & 1 & 1 & 1 & \cdot & 0 \\
0 & 0 & 0 & 0 & 0 & 0 & \cdot
\end{pmatrix}\)}
\end{minipage}\hfill
\begin{minipage}[t]{0.48\textwidth}
\centering
\textbf{Reachability matrix $\Rmat(M)$}\\[0.5em]
{\small\(\begin{pmatrix}
\cdot & 1 & 0 & 0 & 0 & 0 & 0 \\
1 & \cdot & 0 & 0 & 0 & 0 & 0 \\
\textcolor{red}{\mathbf{1}} & \textcolor{red}{\mathbf{1}} & \cdot & 1 & 1 & 0 & 0 \\
\textcolor{red}{\mathbf{1}} & \textcolor{red}{\mathbf{1}} & 1 & \cdot & \textcolor{red}{\mathbf{1}} & 0 & 0 \\
1 & \textcolor{red}{\mathbf{1}} & 0 & 0 & \cdot & 0 & 0 \\
\textcolor{red}{\mathbf{1}} & \textcolor{red}{\mathbf{1}} & 1 & 1 & 1 & \cdot & 0 \\
0 & 0 & 0 & 0 & 0 & 0 & \cdot
\end{pmatrix}\)}
\end{minipage}
\vspace{1.5em}

\begin{minipage}[t]{0.48\textwidth}
\centering
\begin{tikzpicture}[
  scale=0.8, transform shape,
  >=Stealth, thick,
  every node/.style={circle, draw, minimum size=8mm, font=\small\bfseries, inner sep=0pt, line width=1pt}
]
\node[fill=factionBlue!50, draw=factionBlue] (n1) at (0, 0.8) {1};
\node[fill=factionBlue!50, draw=factionBlue] (n2) at (0, -0.8) {2};
\node[fill=factionGold!30, draw=factionGold!90!black] (n5) at (2.25, 2.2) {5};
\node[fill=factionMagenta!50, draw=factionMagenta] (n3) at (4.5, 0.8) {3};
\node[fill=factionMagenta!50, draw=factionMagenta] (n4) at (4.5, -0.8) {4};
\node[fill=factionTeal!40, draw=factionTeal!90!black] (n6) at (2.25, -2.2) {6};
\node[fill=gray!15, draw=gray!80!black] (n7) at (6.0, -2.2) {7};

\draw[->] (n1) to[bend left=18] (n2);
\draw[->] (n2) to[bend left=18] (n1);
\draw[->] (n3) to[bend left=18] (n4);
\draw[->] (n4) to[bend left=18] (n3);
\draw[->] (n5) -- (n1);
\draw[->] (n3) -- (n5);
\draw[->] (n6) -- (n3);
\draw[->] (n6) -- (n4);
\draw[->] (n6) -- (n5);
\end{tikzpicture}
\\[0.5em]
{\small (a) Direct trust graph of $M$.}
\end{minipage}\hfill
\begin{minipage}[t]{0.48\textwidth}
\centering
\begin{tikzpicture}[
  scale=0.8, transform shape,
  >=Stealth, thick,
  every node/.style={circle, draw, minimum size=8mm, font=\small\bfseries, inner sep=0pt, line width=1pt}
]
\fill[factionBlue!12, rounded corners=8pt]
  (-0.45, -1.45) rectangle (0.45, 1.45);
\draw[draw=factionBlue!70, line width=0.8pt, rounded corners=8pt]
  (-0.45, -1.45) rectangle (0.45, 1.45);
\fill[factionMagenta!12, rounded corners=8pt]
  (4.05, -1.45) rectangle (4.95, 1.45);
\draw[draw=factionMagenta!70, line width=0.8pt, rounded corners=8pt]
  (4.05, -1.45) rectangle (4.95, 1.45);

\node[fill=factionBlue!50, draw=factionBlue] (n1) at (0, 0.8) {1};
\node[fill=factionBlue!50, draw=factionBlue] (n2) at (0, -0.8) {2};
\node[fill=factionGold!30, draw=factionGold!90!black] (n5) at (2.25, 2.2) {5};
\node[fill=factionMagenta!50, draw=factionMagenta] (n3) at (4.5, 0.8) {3};
\node[fill=factionMagenta!50, draw=factionMagenta] (n4) at (4.5, -0.8) {4};
\node[fill=factionTeal!40, draw=factionTeal!90!black] (n6) at (2.25, -2.2) {6};
\node[fill=gray!15, draw=gray!80!black] (n7) at (6.0, -2.2) {7};

\draw[->] (n1) to[bend left=18] (n2);
\draw[->] (n2) to[bend left=18] (n1);
\draw[->] (n3) to[bend left=18] (n4);
\draw[->] (n4) to[bend left=18] (n3);
\draw[->] (n5) -- (n1);
\draw[->] (n3) -- (n5);
\draw[->] (n6) -- (n3);
\draw[->] (n6) -- (n4);
\draw[->] (n6) -- (n5);

\draw[->, densely dashed, red, thick] (n3) -- (n1);
\draw[->, densely dashed, red, thick] (n3) -- (n2);
\draw[->, densely dashed, red, thick] (n4) -- (n1);
\draw[->, densely dashed, red, thick] (n4) -- (n2);
\draw[->, densely dashed, red, thick] (n4) -- (n5);
\draw[->, densely dashed, red, thick] (n5) -- (n2);
\draw[->, densely dashed, red, thick] (n6) -- (n1);
\draw[->, densely dashed, red, thick] (n6) -- (n2);
\end{tikzpicture}
\\[0.5em]
{\small (b) Reachability graph; dashed red arrows are indirect-only $\Rmat$-edges.}
\end{minipage}
\caption{The $n = 7$ running example. Top: the trust matrix $M$ and its reachability matrix $\Rmat(M)$; the entries highlighted in red are the eight indirect reachabilities, so $|R(M)| = 17$. Bottom: (a) the trust graph and (b) the reachability graph, with indirect-only edges dashed red and the clusters shaded. Here $\{1, 2\}$ is inward and $\{3, 4\}$ is outward (since $3$ trusts out of the cluster by trusting $5$); $5$, $6$, and $7$ are lone agents. Later figures revisit this example as the proof progresses.}
\label{fig:example}
\end{figure}


\section{The descent measure}\label{sec:basic}

\begin{lemma}[$0 \to 1$ moves are flat]\label{lem:01-flat}
Every $0 \to 1$ move preserves $\Rmat$, and hence is flat.
\end{lemma}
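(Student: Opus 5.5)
We compare the reachability matrices of the pre-state $M$ and post-state $M'$ of a $0 \to 1$ move $(a, z, y)$. Here $z(a) = 1$, $a(y) = 1$ and $z(y) = 0$ in $M$, and $M'$ differs from $M$ only in the single entry $z(y)$, which becomes $1$. We prove both inclusions $\Rmat(M) \le \Rmat(M')$ and $\Rmat(M') \le \Rmat(M)$ entrywise. Equality of the matrices then gives equality of $|R(M)|$ and $|R(M')|$, so the move is flat.

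The inclusion $\Rmat(M) \le \Rmat(M')$ is immediate. The edge set of $M'$ contains that of $M$, so every directed path in $M$ is still a path in $M'$, and every reachable pair stays reachable.

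For the reverse inclusion, take a directed path $u = v_0 \to v_1 \to \cdots \to v_k = v$ in $M'$ with $k \ge 1$ and $u \ne v$. If the path never uses the new edge $z \to y$, it is already a path in $M$. Otherwise we replace each occurrence of $z \to y$ by the two-edge detour $z \to a \to y$. Both edges of the detour exist in $M$, since $z(a) = 1$ and $a(y) = 1$ in the pre-state; $a, z, y$ are distinct, so these are genuine off-diagonal entries. The result is a directed walk in $M$ from $u$ to $v$, and any walk of length at least $1$ between distinct vertices contains a path between them. Hence $\Rmat_{uv}(M) = 1$.

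The only point needing care is a conventions check. Reachability is defined by paths, while the detour may produce a walk that revisits vertices, for instance when $a$ already lies on the original path. The walk-to-path reduction settles this, and because $u \ne v$ the shortened path still has length at least $1$. The diagonal entries are undefined, so cycles through $u$ do not matter.
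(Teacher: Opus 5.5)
Your proof is correct and follows the same approach as the paper's proof. Adding an edge keeps every old reachability, and each use of the new edge $z \to y$ is rerouted through the existing two-edge path $z \to a \to y$; your walk-to-path reduction just makes explicit a detail the paper leaves implicit.
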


\begin{proof}
Suppose $\gos(M, a(y), z)$ is a $0 \to 1$ move, so $z(a) = a(y) = 1$ pre-state, and the move sets $z(y) := 1$. The pre-state contains the length-$2$ path $z \to a \to y$. Any post-state path that uses the new edge $(z, y)$ can be rewritten by replacing that edge with the segment $z \to a \to y$; since the move modified no entry of $M$ other than $z(y)$, the rest of the path is unchanged and lies in the pre-state. Hence every post-state reachability already held in the pre-state, and since the move removes no edges, $\Rmat(M)$ is unchanged.
\end{proof}

\begin{corollary}[$|R(M)|$ is monotone non-increasing]\label{cor:R-monotone}
For every firing move $M \to M'$, $|R(M)| \geq |R(M')|$.
\end{corollary}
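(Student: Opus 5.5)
The plan is to split on the type of the firing move, using Definition~\ref{def:01moves}. Every firing move has $z(y) \neq a(y)$ in the pre-state, so it changes exactly one entry of $M$, namely $z(y)$. That entry either goes from $0$ to $1$ or from $1$ to $0$. Hence every firing move is either a $0 \to 1$ move or a $1 \to 0$ move, and these two cases are exhaustive.

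The $0 \to 1$ case is already handled by Lemma~\ref{lem:01-flat}. That lemma gives $\Rmat(M') = \Rmat(M)$, so $|R(M')| = |R(M)|$ and the inequality holds with equality.

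For the $1 \to 0$ case, $M'$ agrees with $M$ except that the single edge $(z,y)$ is deleted; every other entry is unchanged and no edge is added. So $M' \le M$ entrywise: every directed edge of $M'$ is an edge of $M$. Then every directed path $u \to \cdots \to v$ of length $\geq 1$ in $M'$ is also a path in $M$. Thus $\Rmat(M')_{uv} = 1$ implies $\Rmat(M)_{uv} = 1$, i.e.\ $\Rmat(M') \le \Rmat(M)$ entrywise, which is monotonicity of transitive closure under edge deletion. Counting $1$-entries gives $|R(M')| \le |R(M)|$.

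No step is a real obstacle. The only point needing care is that the case split really is exhaustive. This follows because a firing move by definition alters exactly one off-diagonal entry, from one binary value to the other. As a by-product, every firing move is either flat or progress, as announced in Definition~\ref{def:flat}.
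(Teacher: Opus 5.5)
Your proof is correct and follows the paper's argument: the $0 \to 1$ case is handled by Lemma~\ref{lem:01-flat}, and the $1 \to 0$ case by noting that deleting an edge can only shrink the transitive closure, since every path in $M'$ is already a path in $M$. Your explicit check that the two cases are exhaustive (a firing move flips exactly one off-diagonal entry $z(y)$) is a detail the paper leaves implicit.
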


\begin{proof}
By Lemma~\ref{lem:01-flat}, $0 \to 1$ moves preserve $\Rmat$, so $|R(M)| = |R(M')|$. A $1 \to 0$ move removes one direct trust edge, $z(y)$, from $M$. Removing an edge from a directed graph can only shrink the reachability relation: every path in the post-move graph is a path in the pre-move graph. So $\Rmat(M) \supseteq \Rmat(M')$, hence $|R(M)| \geq |R(M')|$.
\end{proof}

From the above, any decrease in $|R(M)|$ comes from a $1 \to 0$ move, and only when removing the direct edge $z(y)$ also removes some pair from $\Rmat$.

\begin{lemma}[Reachability invariance]\label{lem:R}
Any run of flat moves from $M$ leaves $\Rmat$ unchanged.
\end{lemma}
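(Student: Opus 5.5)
The plan is to prove the statement by induction on the length of the run, reducing it to a single move. The key is to show that a flat move preserves the \emph{set} $\Rmat(M)$, not merely its size. By Definition~\ref{def:flat}, a flat move satisfies $|R(M')| = |R(M)|$, and by Definition~\ref{def:01moves} every firing move is either $0 \to 1$ or $1 \to 0$. I would split into these two cases.

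For a $0 \to 1$ move, Lemma~\ref{lem:01-flat} already gives $\Rmat(M') = \Rmat(M)$ as sets, and nothing further is needed.

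For a $1 \to 0$ move, the proof of Corollary~\ref{cor:R-monotone} shows that every path in the post-move graph is a path in the pre-move graph, so $\Rmat(M') \subseteq \Rmat(M)$ entrywise. Both relations are finite sets of ordered pairs, so a subset of equal cardinality is the whole set. Flatness gives equal cardinalities, hence $\Rmat(M') = \Rmat(M)$. This equal-size-subset step is the only point that uses more than the earlier results verbatim. It is also the only real subtlety: one must not confuse ``preserves $|R(M)|$'' with ``preserves $\Rmat$.'' The containment from the $1 \to 0$ case is exactly what upgrades the first to the second.

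Finally I would induct on the number of moves. Write the run as $M = M_0 \to M_1 \to \cdots \to M_k$ with each move flat, so that $|R(M_{i+1})| = |R(M_i)|$ for every $i$. Applying the single-step result to each move gives $\Rmat(M_{i+1}) = \Rmat(M_i)$. Chaining these equalities yields $\Rmat(M_k) = \Rmat(M)$. The empty run ($k = 0$) is trivial.
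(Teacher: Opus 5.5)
Your proof is correct and follows essentially the same route as the paper: the containment $R(M') \subseteq R(M)$ together with the equal cardinality given by flatness forces equality, and this is then chained along the run. The only difference is that the paper takes the containment uniformly for every firing move from Corollary~\ref{cor:R-monotone}, whereas you first split off the $0 \to 1$ case via Lemma~\ref{lem:01-flat}; your remark that the containment, not just the size inequality, is what upgrades flatness to invariance is exactly the point the paper's proof relies on.
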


\begin{proof}
Let $M \to M'$ be a flat firing move. By Corollary~\ref{cor:R-monotone}, $\Rmat(M) \supseteq \Rmat(M')$; flatness gives $|R(M)| = |R(M')|$; a superset of equal size forces $\Rmat(M) = \Rmat(M')$. Applying this along a flat-move sequence keeps $\Rmat$ constant.
\end{proof}

\begin{corollary}\label{cor:clusters}
Any run of flat moves leaves the R-clusters unchanged.
\end{corollary}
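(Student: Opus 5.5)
This corollary follows directly from Lemma~\ref{lem:R} together with Definition~\ref{def:scc}; no new construction is needed. We fix a run of flat moves $M = M_0 \to M_1 \to \cdots \to M_k$. By Lemma~\ref{lem:R}, $\Rmat(M_i) = \Rmat(M_0)$ for every $i$, so the reachability relation is literally the same binary relation on $[n]$ at every stage of the run.

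Next we recall that Definition~\ref{def:scc} defines an $\Rmat$-cluster purely in terms of $\Rmat$. A cluster is a maximal set $C$ with $|C| \geq 2$ in which $\Rmat_{uv} = \Rmat_{vu} = 1$ for all distinct $u, v \in C$. Both the mutual-reachability relation and the maximality condition are determined by the entries of $\Rmat$ alone, and the size condition $|C| \geq 2$ does not involve $M$ at all. Since $\Rmat(M_i) = \Rmat(M_0)$, the family of clusters of $M_i$ coincides with that of $M_0$. The same argument shows that the lone agents are unchanged, and likewise any derived notion that is defined from $\Rmat$ alone, such as which clusters reach which outsiders.

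The only point needing care is that the clusters are defined from $\Rmat$ and not from $M$, because $M$ itself does change along the run. Definition~\ref{def:scc} already settles this by noting that the strongly-connected components of $M$ and of $\Rmat$ coincide. So the proof is a direct two-line appeal, and I expect no real obstacle.
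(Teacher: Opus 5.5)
Your proposal is correct and matches the paper's approach. The paper states this corollary without proof, as an immediate consequence of Lemma~\ref{lem:R}: $R$-clusters are defined purely from $\Rmat$, and a run of flat moves leaves $\Rmat$ unchanged.
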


\begin{lemma}[Paths stay in clusters]\label{lem:scc-path}
Let $u$ and $v$ lie in a common R-cluster $C$. Then every directed $M$-path from $u$ to $v$ has all its vertices in $C$.
\end{lemma}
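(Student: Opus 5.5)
The argument is a direct one about strongly-connected components, and I would prove it straight from Definition~\ref{def:reach} and Definition~\ref{def:scc}. Fix a directed $M$-path $u = w_0 \to w_1 \to \cdots \to w_k = v$, and let $w_i$ be an arbitrary vertex on it. The aim is to show that $w_i$ and $u$ are mutually reachable. Then $w_i$ lies in the same strongly-connected component of $M$ as $u$. Definition~\ref{def:scc} identifies these components with those of $\Rmat$, and that component is $C$ because $|C| \geq 2$.

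The two reachabilities come from splitting the path. The initial segment $w_0 \to \cdots \to w_i$ shows that $u$ reaches $w_i$. The tail $w_i \to \cdots \to w_k = v$ shows that $w_i$ reaches $v$. Since $u, v \in C$ are distinct and mutually reachable, $\Rmat_{vu} = 1$, so some $M$-path runs from $v$ back to $u$. Concatenating the tail with this return path gives a walk from $w_i$ to $u$ of length at least $1$, so $w_i$ reaches $u$. This holds for every $i$, so every vertex of the path lies in $C$.

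The only step needing care is the endpoints and the diagonal convention. $\Rmat$ is defined only off the diagonal, and a walk may revisit vertices. For $i = 0$ or $i = k$, $w_i$ is $u$ or $v$, which are in $C$ by hypothesis, so there is nothing to check. For internal vertices, a concatenated walk from $w_i$ to $u$ may repeat vertices. That is harmless: I would note that any walk of positive length between distinct vertices contains a path between them, so reachability in the sense of Definition~\ref{def:reach} still holds. An internal vertex might also coincide with $u$ or $v$, for instance if the path is not simple; in that case it is already in $C$. Finally, maximality of $C$ as a strongly-connected component is what lets me conclude $w_i \in C$ rather than merely that $\{w_i\} \cup C$ is mutually reachable. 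I expect no real obstacle beyond stating these bookkeeping points cleanly.
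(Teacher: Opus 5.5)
Your proof is correct and matches the paper's argument: you split the path at an arbitrary vertex, use the return path from $v$ to $u$ to make that vertex mutually reachable with $u$, and conclude by maximality of $C$. The extra bookkeeping about walks versus paths and the off-diagonal convention is a harmless refinement that the paper leaves implicit.
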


\begin{proof}
Let $x$ be any vertex on the path. $u$ reaches $x$ in $\Rmat$, and $x$ reaches $v$ in $\Rmat$. Since $u$ and $v$ lie in the same cluster, $v$ reaches $u$ in $\Rmat$. Composing: $x$ reaches $v$ reaches $u$, so $x$ reaches $u$. Combined with $u$ reaches $x$, the vertex $x$ is mutually reachable with $u$, so $x \in C$.
\end{proof}


\section{The two kinds of cluster}\label{sec:dichotomy}

On a flat-closure the R-cluster structure is fixed (Corollary~\ref{cor:clusters}). We define two kinds of clusters.

\begin{definition}[Inward and outward clusters]\label{def:inout}
A cluster $C$ is \emph{outward} if some outsider is reachable from $C$ in $\Rmat$ (that is, $\Rmat_{uy} = 1$ for some $u \in C$ and $y \notin C$), and \emph{inward} otherwise.
\end{definition}

For $C$ to be outward, some member must \emph{directly} trust an outsider: any path leaving $C$ takes a first step out, so if no member trusts an outsider, no member can reach one. And since $C$ is strongly connected, an outsider reachable from one member is reachable from all, so being outward is a property of the \emph{cluster}, not of any single member. The two are treated differently by this proof: in our sequence, an outward cluster gets disassembled so that its members no longer trust each other, while an inward one gets completed so that all its members directly trust each other. Figure~\ref{fig:cases} shows a minimal $n = 4$ instance of each, and we will now describe the process in detail.

\begin{figure}[!htb]
\centering
\begin{minipage}[t]{0.48\textwidth}
\centering
\begin{tikzpicture}[
  >=Stealth, thick,
  every node/.style={circle, draw, minimum size=8mm, font=\small\bfseries, inner sep=0pt, line width=1pt}
]
\fill[factionBlue!12, draw=factionBlue!70, line width=0.8pt, rounded corners=8pt]
  (-0.65, -1.55) rectangle (0.65, 1.55);
\draw[draw=factionBlue!70, line width=0.8pt, rounded corners=8pt]
  (-0.65, -1.55) rectangle (0.65, 1.55);

\node[fill=factionBlue!50, draw=factionBlue] (n1) at (0, 0.9) {1};
\node[fill=factionBlue!50, draw=factionBlue] (n2) at (0, -0.9) {2};
\node[fill=gray!15, draw=gray!80!black] (n3) at (2.4, 0.9) {3};
\node[fill=gray!15, draw=gray!80!black] (n4) at (2.4, -0.9) {4};

\draw[->] (n1) to[bend left=18] (n2);
\draw[->] (n2) to[bend left=18] (n1);
\end{tikzpicture}\\[1em]
{\small (a) Inward: $C = \{1, 2\}$ has no trust edges to outsiders.}
\end{minipage}\hfill
\begin{minipage}[t]{0.48\textwidth}
\centering
\begin{tikzpicture}[
  >=Stealth, thick,
  every node/.style={circle, draw, minimum size=8mm, font=\small\bfseries, inner sep=0pt, line width=1pt}
]
\fill[factionBlue!12, draw=factionBlue!70, line width=0.8pt, rounded corners=8pt]
  (-0.65, -1.55) rectangle (0.65, 1.55);
\draw[draw=factionBlue!70, line width=0.8pt, rounded corners=8pt]
  (-0.65, -1.55) rectangle (0.65, 1.55);

\node[fill=factionBlue!50, draw=factionBlue] (n1) at (0, 0.9) {1};
\node[fill=factionBlue!50, draw=factionBlue] (n2) at (0, -0.9) {2};
\node[fill=factionGold!30, draw=factionGold!90!black, line width=1.5pt] (n3) at (2.1, 0) {$w$};
\node[fill=gray!15, draw=gray!80!black] (n4) at (3.9, 0) {4};

\draw[->] (n1) to[bend left=18] (n2);
\draw[->] (n2) to[bend left=18] (n1);
\draw[->, thick, factionGold!90!black] (n1) -- (n3);
\end{tikzpicture}\\[1em]
{\small (b) Outward: at least one member of $C = \{1, 2\}$ trusts the outsider $w$.}
\end{minipage}
\caption{The two kinds of cluster.}
\label{fig:cases}
\end{figure}


\section{Outward clusters yield progress}\label{sec:outward}

\begin{definition}[Common trustee]\label{def:trustee}
An outsider $w \notin C$ is a \emph{common trustee} of $C$ when every member of $C$ trusts $w$. A common trustee trusts no member of $C$ in return: otherwise $w$ would reach all of $C$ and be reached by all of $C$, putting $w$ into the cluster by definition.
\end{definition}

We dissolve an outward cluster by manufacturing a common trustee and using it as a speaker who causes all the agents in the cluster to distrust each other; this strictly lowers $|R(M)|$. An outward cluster may have a common trustee at the start, but if one does not exist, it can be manufactured out of a reachable outsider with a short run of flat moves, as we will show here.

\begin{lemma}[Common trustee]\label{lem:trustee}
Let $C$ be an outward cluster. A finite sequence of flat $0 \to 1$ moves makes some outsider $w \notin C$ a common trustee of $C$ (so $z(w) = 1$ for every $z \in C$).
\end{lemma}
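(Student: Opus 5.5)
The plan is to fix, once and for all, an outsider $w$ that some member of $C$ \emph{directly} trusts, and then spread trust in $w$ backwards through $C$ along its internal trust edges, one $0 \to 1$ move at a time. Such a $w$ exists by the remark after Definition~\ref{def:inout}: since $C$ is outward, some member $u \in C$ reaches an outsider, and the first edge of such a path that leaves $C$ gives a member directly trusting an outsider. Re-choosing $u$ as that member, we get $u \in C$ and $w \notin C$ with $u(w) = 1$.

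Let $S = \{z \in C : z(w) = 1\}$, so $u \in S$. If $S = C$ we are done, so suppose $S \neq C$ and pick $v \in C \setminus S$. Since $C$ is strongly connected, $M$ contains a directed path from $v$ to $u$. By Lemma~\ref{lem:scc-path}, every vertex of this path lies in $C$. The path starts outside $S$ and ends inside $S$, so it has a consecutive pair $z \to x$ with $z \in C \setminus S$ and $x \in S$. This gives $z(x) = 1$, $x(w) = 1$ and $z(w) = 0$.

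The agents $x, z, w$ are distinct, because $x \neq z$ (one lies in $S$, the other does not) and $w \notin C$. Hence $\gos(M, x(w), z)$ is a firing move, and it is a $0 \to 1$ move setting $z(w) := 1$. By Lemma~\ref{lem:01-flat} it is flat, so it strictly enlarges $S$.

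To iterate, I need the situation to persist after each move. By Lemma~\ref{lem:R} and Corollary~\ref{cor:clusters}, the run of flat moves leaves $\Rmat$ and the clusters unchanged, so $C$ is still a cluster and $w$ is still an outsider. Moreover, $0 \to 1$ moves only add edges. So every existing edge, in particular each member's trust of $w$ and the internal edges of $C$ used for the paths, survives, and $S$ never shrinks. The argument therefore applies again in the new state. Since $S$ grows by one member per move, after at most $|C| - 1$ flat $0 \to 1$ moves we reach $S = C$, i.e.\ $z(w) = 1$ for every $z \in C$, so $w$ is a common trustee.

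The main point requiring care is finding a boundary edge $z \to x$ that is a \emph{direct} edge of $M$, not merely an $\Rmat$-reachability. This is why the path is taken in $M$ itself and confined to $C$ via Lemma~\ref{lem:scc-path}. Only then is the listener $z$ guaranteed to trust a speaker $x$ who already trusts $w$, and only then is the move within the paper's definition of a firing move.
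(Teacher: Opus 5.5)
Your proof is correct and follows essentially the same route as the paper's: fix an outsider $w$ that some $u \in C$ directly trusts, then propagate trust of $w$ backward along $M$-paths that Lemma~\ref{lem:scc-path} confines to $C$, using flat $0 \to 1$ moves. Your boundary-edge argument on the set $S$, together with the explicit check that $\Rmat$, the cluster $C$, and the existing trust of $w$ survive each move, makes the iteration and the bound of $|C|-1$ moves slightly more explicit than the paper's informal ``walk backward and repeat.''
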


\begin{proof}
Since $C$ is outward, some member directly trusts an outsider; fix one such outsider $w$, so $a(w) = 1$ for some $a \in C$. Next, we spread that trust of $w$ to the rest of the cluster. For any $z \in C$ with $z(w) = 0$, strong connectivity gives an $M$-path from $z$ to $a$, which stays inside $C$ by Lemma~\ref{lem:scc-path}. Walk the opinion of $w$ backward along this path: wherever consecutive vertices $z_i \to z_{i+1}$ have $z_{i+1}(w) = 1$ and $z_i(w) = 0$, fire $\gos(M, z_{i+1}(w), z_i)$, a $0 \to 1$ move that sets $z_i(w) := 1$ and is flat by Lemma~\ref{lem:01-flat}. Repeating over all members brings every $z \in C$ to $z(w) = 1$. Figure~\ref{fig:agreement} shows the backward walk in general, and Figure~\ref{fig:ex-sa} shows it on the running example.
\end{proof}

\begin{figure}[!htb]
\centering
\begin{minipage}[t]{0.47\textwidth}
\centering
\begin{tikzpicture}[
  >=Stealth, thick,
  every node/.style={circle, draw, minimum size=8mm, font=\small\bfseries, inner sep=0pt, line width=1pt}
]
\fill[factionBlue!12, draw=factionBlue!70, line width=0.8pt, rounded corners=8pt]
  (-0.8, -1.5) rectangle (5.4, 0.9);
\draw[draw=factionBlue!70, line width=0.8pt, rounded corners=8pt]
  (-0.8, -1.5) rectangle (5.4, 0.9);

\node[fill=factionBlue!50, draw=factionBlue] (z0) at (0, 0) {$z_0$};
\node[fill=factionBlue!50, draw=factionBlue] (z1) at (1.5, 0) {$z_1$};
\node[fill=factionBlue!50, draw=factionBlue] (z2) at (3.0, 0) {$z_2$};
\node[fill=factionBlue!50, draw=factionBlue] (zk) at (4.5, 0) {$a$};

\node[fill=factionGold!30, draw=factionGold!90!black] (y) at (2.25, 2.0) {$w$};

\draw[->] (z0) -- (z1);
\draw[->] (z1) -- (z2);
\draw[->] (z2) -- (zk);

\draw[->, factionGold!90!black, thick] (zk) to[bend right=10] (y);

\node[font=\scriptsize, draw=none] at (0, -0.7) {$z_0(w)=0$};
\node[font=\scriptsize, draw=none] at (1.5, -0.7) {$z_1(w)=0$};
\node[font=\scriptsize, draw=none] at (3.0, -0.7) {$z_2(w)=0$};
\node[font=\scriptsize, draw=none] at (4.5, -0.7) {$a(w)=1$};
\end{tikzpicture}
\\[0.5em]
{\small (a) Before, only $a$ directly trusts $w$ (gold edge).}
\end{minipage}\hfill
\begin{minipage}[t]{0.47\textwidth}
\centering
\begin{tikzpicture}[
  >=Stealth, thick,
  every node/.style={circle, draw, minimum size=8mm, font=\small\bfseries, inner sep=0pt, line width=1pt}
]
\fill[factionBlue!12, draw=factionBlue!70, line width=0.8pt, rounded corners=8pt]
  (-0.8, -1.5) rectangle (5.4, 0.9);
\draw[draw=factionBlue!70, line width=0.8pt, rounded corners=8pt]
  (-0.8, -1.5) rectangle (5.4, 0.9);

\node[fill=factionBlue!50, draw=factionBlue] (z0) at (0, 0) {$z_0$};
\node[fill=factionBlue!50, draw=factionBlue] (z1) at (1.5, 0) {$z_1$};
\node[fill=factionBlue!50, draw=factionBlue] (z2) at (3.0, 0) {$z_2$};
\node[fill=factionBlue!50, draw=factionBlue] (zk) at (4.5, 0) {$a$};

\node[fill=factionGold!30, draw=factionGold!90!black] (y) at (2.25, 2.0) {$w$};

\draw[->] (z0) -- (z1);
\draw[->] (z1) -- (z2);
\draw[->] (z2) -- (zk);

\draw[->, factionGold!90!black, thick] (z0) to[bend left=15] (y);
\draw[->, factionGold!90!black, thick] (z1) to[bend left=8] (y);
\draw[->, factionGold!90!black, thick] (z2) to[bend right=8] (y);
\draw[->, factionGold!90!black, thick] (zk) to[bend right=15] (y);

\node[font=\scriptsize, draw=none] at (0, -0.7) {$z_0(w)=1$};
\node[font=\scriptsize, draw=none] at (1.5, -0.7) {$z_1(w)=1$};
\node[font=\scriptsize, draw=none] at (3.0, -0.7) {$z_2(w)=1$};
\node[font=\scriptsize, draw=none] at (4.5, -0.7) {$a(w)=1$};
\end{tikzpicture}
\\[0.5em]
{\small (b) After, backward $0 \to 1$-walk has propagated $1$ from $a$ to $z_2$ to $z_1$ to $z_0$. Every member of $C$ now directly trusts $w$.}
\end{minipage}
\caption{The common-trustee process (Lemma~\ref{lem:trustee}) before and after the backwards opinion walk. Each backward step is a $0 \to 1$ move and therefore flat, so $\Rmat$ never changes.}
\label{fig:agreement}
\end{figure}

\begin{figure}[!htb]
\centering
\begin{tikzpicture}[
  >=Stealth, thick,
  every node/.style={circle, draw, minimum size=8mm, font=\small\bfseries, inner sep=0pt, line width=1pt}
]
\fill[factionBlue!12, rounded corners=8pt]
  (-0.45, -1.45) rectangle (0.45, 1.45);
\draw[draw=factionBlue!70, line width=0.8pt, rounded corners=8pt]
  (-0.45, -1.45) rectangle (0.45, 1.45);
\fill[factionMagenta!12, rounded corners=8pt]
  (4.05, -1.45) rectangle (4.95, 1.45);
\draw[draw=factionMagenta!70, line width=0.8pt, rounded corners=8pt]
  (4.05, -1.45) rectangle (4.95, 1.45);

\node[fill=factionBlue!50, draw=factionBlue] (n1) at (0, 0.8) {1};
\node[fill=factionBlue!50, draw=factionBlue] (n2) at (0, -0.8) {2};
\node[fill=factionGold!30, draw=factionGold!90!black] (n5) at (2.25, 2.2) {5};
\node[fill=factionMagenta!50, draw=factionMagenta] (n3) at (4.5, 0.8) {3};
\node[fill=factionMagenta!50, draw=factionMagenta] (n4) at (4.5, -0.8) {4};
\node[fill=factionTeal!40, draw=factionTeal!90!black] (n6) at (2.25, -2.2) {6};
\node[fill=gray!15, draw=gray!80!black] (n7) at (6.0, -2.2) {7};
\draw[->] (n1) to[bend left=18] (n2);
\draw[->] (n2) to[bend left=18] (n1);
\draw[->] (n3) to[bend left=18] (n4);
\draw[->] (n4) to[bend left=18] (n3);
\draw[->] (n5) -- (n1);
\draw[->] (n3) -- (n5);
\draw[->, green!55!black, very thick, densely dashed] (n4) -- (n5);
\draw[->] (n6) -- (n3);
\draw[->] (n6) -- (n4);
\draw[->] (n6) -- (n5);
\end{tikzpicture}
\caption{The common-trustee step. Cluster $\{3, 4\}$ is outward, and a single move propagates 3's trust of 5 across the edge $4 \to 3$, setting $4(5) := 1$ (green dashed), so both members now trust $5$.}
\label{fig:ex-sa}
\end{figure}

\begin{lemma}[Dissolution]\label{lem:B}
Let $w$ be a common trustee of a cluster $C$. Then a finite sequence of firing moves brings $M$ to a state in which $z(y) = 0$ for all distinct $z, y \in C$ (no-one in the cluster trusts anyone else in the cluster anymore and it is said to be dissolved), and $|R(M)|$ strictly decreases by at least $|C|(|C|-1) \geq 2$.
\end{lemma}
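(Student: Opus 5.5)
The plan is to use the common trustee $w$ as the speaker for every removal. Because $w$ is a common trustee, it trusts no member of $C$ (Definition~\ref{def:trustee}), so $w(y) = 0$ for every $y \in C$. At the same time, every $z \in C$ has $z(w) = 1$. So for each ordered pair of distinct $z, y \in C$ with $z(y) = 1$ we fire $\gos(M, w(y), z)$. Here $w, z, y$ are distinct because $w \notin C$. The listener $z$ trusts the speaker $w$, and $z(y) = 1 \neq 0 = w(y)$. So this is a legitimate $1 \to 0$ firing move, and it sets $z(y) := 0$. We process the finitely many pairs of $C$ one after another, in any order.

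\textbf{Preconditions stay valid.} Each move changes only the single entry $z(y)$, whose target $y$ lies in $C$. So no move ever touches an entry of the form $z'(w)$, since $w \notin C$. No move touches $w$'s row either, since $w$ is never a listener. Hence $z'(w) = 1$ and $w(y') = 0$ persist for all $z', y' \in C$ throughout the run. Every pair that is still trusting when we reach it can therefore still be fired. A pair already at $0$ is simply skipped, and no move ever turns an intra-$C$ entry back to $1$. After at most $|C|(|C|-1)$ moves we have $z(y) = 0$ for all distinct $z, y \in C$, so $C$ is dissolved.

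\textbf{Counting the decrease.} Let $M$ be the initial state and $M^*$ the final one. By Corollary~\ref{cor:R-monotone}, applied move by move, $\Rmat(M^*) \subseteq \Rmat(M)$. Initially all $|C|(|C|-1)$ ordered pairs inside $C$ are in $\Rmat(M)$, because $C$ is a cluster. It therefore suffices to show that no such pair survives in $\Rmat(M^*)$, which gives a drop of at least $|C|(|C|-1) \geq 2$.

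\textbf{The main obstacle: indirect paths through outsiders.} The only real difficulty is ruling out that $z$ still reaches $y$ in $M^*$ by a roundabout path that leaves $C$ and comes back. Suppose such an $M^*$-path from $z$ to $y$ exists. Since no intra-$C$ edges remain, its first step goes to some $x \notin C$, and $x$ then reaches $y$ in $M^*$.
\begin{itemize}
\item The edge $z \to x$ was never modified, so $z$ reaches $x$ in $M$.
\item Since $\Rmat(M^*) \subseteq \Rmat(M)$, $x$ reaches $y$ in $M$.
\item Since $y$ reaches $z$ in $M$, $x$ reaches $z$ in $M$.
\end{itemize}
So $x$ and $z$ are mutually reachable in $M$, which puts $x$ in $C$ (Definition~\ref{def:scc}) and contradicts $x \notin C$. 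Hence every ordered pair inside $C$ is lost, and $|R(M)|$ decreases by at least $|C|(|C|-1)$, as claimed.
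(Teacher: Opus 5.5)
Your proof is correct and takes essentially the same route as the paper: fire $\gos(M, w(y), z)$ on every remaining intra-$C$ edge, then show that no reachability between members of $C$ survives. Your first-step argument, in which the outsider $x$ would be mutually reachable with $z$ in the pre-state, is Lemma~\ref{lem:scc-path} inlined and combined with $\Rmat(M^*) \subseteq \Rmat(M)$; your check that $z'(w)=1$ and $w(y')=0$ persist throughout the run fills in a step the paper leaves implicit.
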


\begin{proof}
A common trustee trusts no member of $C$, so $w(y) = 0$ for every $y \in C$. We use $w$ as the speaker: while some distinct $z, y \in C$ have $z(y) = 1$, apply $\gos(M, w(y), z)$, which sets $z(y) := 0$. Each pass deletes a distinct intra-cluster edge, so after finitely many passes $M$ is zero on $C \times C$, i.e. $C$ is dissolved.

\emph{Strict drop.} Before the process $C$ is a cluster, so $\Rmat_{zy} = 1$ for all distinct $z, y \in C$, worth $|C|(|C|-1)$ to $|R(M)|$. By Lemma~\ref{lem:scc-path}, every $M$-path between two members of $C$ has all its edges inside $C \times C$; once those edges are gone, no path between two members survives, so every such $\Rmat_{zy}$ falls to $0$. Hence $|R(M)|$ drops by at least $|C|(|C|-1) \geq 2$. This is only a lower bound: an outside path routed through $C$ is also severed when the cluster is dissolved, and its two endpoints need not both lie in $C$, so such a loss adds to the drop without appearing in $|C|(|C|-1)$. Figure~\ref{fig:ex-lemmaB} shows the dissolution on the running example.
\end{proof}

\begin{figure}[!htb]
\centering
\begin{tikzpicture}[
  >=Stealth, thick,
  every node/.style={circle, draw, minimum size=8mm, font=\small\bfseries, inner sep=0pt, line width=1pt}
]
\fill[factionBlue!12, rounded corners=8pt]
  (-0.45, -1.45) rectangle (0.45, 1.45);
\draw[draw=factionBlue!70, line width=0.8pt, rounded corners=8pt]
  (-0.45, -1.45) rectangle (0.45, 1.45);

\node[fill=factionBlue!50, draw=factionBlue] (n1) at (0, 0.8) {1};
\node[fill=factionBlue!50, draw=factionBlue] (n2) at (0, -0.8) {2};
\node[fill=factionGold!30, draw=factionGold!90!black] (n5) at (2.25, 2.2) {5};
\node[fill=factionMagenta!50, draw=factionMagenta] (n3) at (4.5, 0.8) {3};
\node[fill=factionMagenta!50, draw=factionMagenta] (n4) at (4.5, -0.8) {4};
\node[fill=factionTeal!40, draw=factionTeal!90!black] (n6) at (2.25, -2.2) {6};
\node[fill=gray!15, draw=gray!80!black] (n7) at (6.0, -2.2) {7};
\draw[->] (n1) to[bend left=18] (n2);
\draw[->] (n2) to[bend left=18] (n1);
\draw[->, red!75!black, very thick, densely dashed] (n3) to[bend left=18] (n4);
\draw[->, red!75!black, very thick, densely dashed] (n4) to[bend left=18] (n3);
\draw[->] (n5) -- (n1);
\draw[->] (n3) -- (n5);
\draw[->] (n4) -- (n5);
\draw[->] (n6) -- (n3);
\draw[->] (n6) -- (n4);
\draw[->] (n6) -- (n5);
\end{tikzpicture}
\caption{Dissolution. The common trustee $5$ acts as speaker for two moves, $\gos(M, 5(4), 3)$ and $\gos(M, 5(3), 4)$, removing the edges $3(4)$ and $4(3)$ (red dashed) and dissolving the cluster $\{3, 4\}$; every other edge is unchanged.}
\label{fig:ex-lemmaB}
\end{figure}


\section{Inward clusters lock into factions}\label{sec:post-processing}

An inward cluster cannot be dissolved in this way since it trusts no outsider who could be turned into a common trustee. Instead, we first \emph{saturate} the cluster until its members all trust each other, then \emph{align} every outsider who trusts into the cluster by overwriting its opinions to match the cluster's. The result is a structure that the companion paper~\cite{absorbing} calls a \emph{faction}: a cluster of directly mutually-trusting agents (the \emph{core}) who only trust each other, together with the outsiders that directly trust this cluster and no-one else (the \emph{periphery}). We use $C$ for a core, $P(C)$ for its periphery, and $F(C) = C \cup P(C)$ for the faction. (The companion paper uses slightly different notation: for faction index $\ell$, there are faction $F_\ell$, core $C_\ell$, and peripherals $F_\ell \setminus C_\ell$.)

\begin{definition}[Periphery and anchor]\label{def:anchor}
For an inward cluster $C$, the \emph{periphery} of $C$, written $P(C)$, consists of the agents outside $C$ that directly trust some member of $C$. We read $P(C)$ dynamically: it is evaluated when $C$ is handled in the fixed order of the saturate--align process below. An agent that an earlier faction already captured has had its row overwritten to trust only that earlier core, so it no longer trusts into $C$ and is not a candidate. For each peripheral $z \in P(C)$, we fix an \emph{anchor} $a_z \in C$ with $z(a_z) = 1$.
\end{definition}

\begin{definition}[Aligned peripheral]\label{def:alignment}
A peripheral $z \in P(C)$ is \emph{aligned with} its anchor $a_z$ when row $z$ agrees with row $a_z$ everywhere off $z(a_z)$ and $z(z)$; that is, $z(y) = a_z(y)$ for every $y \notin \{a_z, z\}$.
\end{definition}

A peripheral already trusts its anchor, but its other opinions need not match $a_z$'s yet. The \emph{alignment sequence for $z$} brings them into line: while some $y \notin \{a_z, z\}$ has $z(y) \neq a_z(y)$, fire $\gos(M, a_z(y), z)$, which sets $z(y) := a_z(y)$. Each move edits one such $z(y)$ and introduces no new disagreement, so after finitely many moves $z$ is aligned. The firing condition $z(a_z) = 1$ holds throughout, since $a_z$ is never a target if it is itself the speaker.

\paragraph{The saturate--align process.}
For each inward cluster $C$ in some fixed order:
\begin{enumerate}
\item \emph{Saturate} $C$: apply intra-$C$ $0 \to 1$ firing moves greedily until $M$ on $C \times C$ equals $K_C$ (the complete digraph on $C$).
\item \emph{Align}: for each $z \in P(C)$, run the alignment sequence for $z$.
\end{enumerate}
Figure~\ref{fig:ex-lemmaA} shows the process on the running example.

\begin{definition}[Silent and active agents]\label{def:silent}
After the saturate--align process, an agent is \emph{silent} if it belongs to one of the factions $F(C)$ built by the process, and \emph{active} otherwise. These labels are assigned only where the saturate--align process runs, namely when every cluster is inward; an outward cluster is dissolved instead (Section~\ref{sec:outward}) and its members are never sorted into silent and active.
\end{definition}

\begin{lemma}[Structure after saturate--align]\label{lem:post-processing}
Suppose every cluster of $M$ is inward, and run the saturate--align process on every cluster. Afterwards:
\begin{itemize}
\item[(a)] \emph{(Factions lock.)} For every cluster $C$, each core agent ends up trusting exactly the other core agents and each peripheral trusts exactly the core; no member of $F(C)$ can ever be a listener afterward.
\item[(b)] \emph{(Factions are disjoint.)} $F(C) \cap F(C') = \emptyset$ whenever $C \neq C'$.
\end{itemize}
\end{lemma}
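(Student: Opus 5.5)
The plan is to track, row by row, which entries of $M$ each phase of the saturate--align process can touch. A gossip move $\gos(M, a(y), z)$ edits only row $z$, so it suffices to control which agents ever act as listeners. The first step is to pin down the rows of cluster members. Every cluster is inward in the initial $M$, so no member of any cluster $C'$ directly trusts an agent outside $C'$ (as remarked after Definition~\ref{def:inout}).

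\textbf{Step 1 (saturation is possible and local).} I would show that while $M|_{C\times C}\neq K_C$, an intra-$C$ $0\to1$ firing move exists.
\begin{itemize}
\item[(i)] Pick distinct $z,y\in C$ with $z(y)=0$, and take an $M$-path $z=z_0\to z_1\to\cdots\to z_k=y$. By Lemma~\ref{lem:scc-path} it lies in $C$, and $k\geq2$.
\item[(ii)] Since $z_{k-1}(y)=1$ and $z_0(y)=0$, there is a first index $i$ with $z_i(y)=0$ and $z_{i+1}(y)=1$.
\item[(iii)] Firing $\gos(M, z_{i+1}(y), z_i)$ is then an intra-$C$ $0\to1$ move.
\end{itemize}
Each such move only adds $1$s inside $C\times C$, so greedy saturation terminates at $K_C$.

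These moves only change entries in $C\times C$. By Lemma~\ref{lem:01-flat} they are flat, so by Corollary~\ref{cor:clusters} no cluster changes. After saturation each core row of $C$ is $1$ exactly on $C$ minus the agent itself, using inwardness for the zeros outside $C$.

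\textbf{Step 2 (who can be a listener, and disjointness).} I would prove by induction over the fixed order of clusters the following invariant:
\begin{itemize}
\item[(i)] the row of a member of a cluster $C'$ is modified only during the saturation of $C'$;
\item[(ii)] the row of an agent outside every cluster is modified only during its own alignment, which happens at most once.
\end{itemize}
For (i), a member $u$ of $C'$ trusts only members of $C'$ at every stage. So $u$ never trusts into another cluster $C$, never lies in $P(C)$, and is never a listener in another cluster's saturation. For (ii), once a peripheral $z$ is aligned with $a_z\in C$, its row copies that of $a_z$ off $\{z,a_z\}$. Since $a_z(z)=0$ by inwardness and $z(a_z)=1$, row $z$ is $1$ exactly on $C$. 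Hence $z$ trusts no member of any later cluster, and by the dynamic reading of Definition~\ref{def:anchor} it is not a candidate for a later periphery.

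Part (b) then follows from three facts:
\begin{itemize}
\item distinct cores are distinct strongly-connected components and hence disjoint;
\item a core agent is never a peripheral of another faction, by (i);
\item a peripheral is captured by at most one faction, by (ii).
\end{itemize}

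\textbf{Step 3 (factions lock).} Given these final rows, a direct check shows no member of $F(C)$ is a listener. Suppose $z\in F(C)$ and $z(a)=1$; then $a\in C$. For every $y\notin\{z,a\}$, both $z(y)$ and $a(y)$ equal $1$ exactly when $y\in C$. This holds whether $z$ is a core agent or a peripheral, so $z(y)=a(y)$ and no move with listener $z$ fires.

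The step I expect to be the main obstacle is the ``ever afterward'' clause, because alignment moves may be $1\to0$ and change $\Rmat$. I would handle it with a closure argument, with the invariant being that the rows of $C$ stay fixed and each row of $F(C)$ agrees with them off $\{z,a\}$:
\begin{itemize}
\item the only speakers relevant to a listener in $F(C)$ are core agents $a\in C$;
\item core rows change only if a core agent is a listener, which the check above forbids;
\item so the agreement persists under any subsequent sequence of moves, whether they come from later clusters' saturate--align phases or from arbitrary later gossip.
\end{itemize}
Because of this, I would phrase the invariant explicitly as a property of the set of rows indexed by $F(C)$ that is closed under every firing move.
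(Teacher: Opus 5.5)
Your proposal is correct and follows essentially the same route as the paper. Saturation always has a move available: your $0\to1$ transition of column $y$ along a path is the paper's non-transitive triple $p(r)=r(q)=1$, $p(q)=0$ in slightly different form. From there the paper shows that rows of a cluster are only edited inside that cluster, that aligned peripherals trust exactly $C$, and that disjointness follows from the dynamic reading of $P(C)$, all as you do. Your explicit closure argument for the ``ever afterward'' clause is a useful addition: once no member of $F(C)$ can be a listener, no firing move can edit any row indexed by $F(C)$. The paper leaves this implicit. One small fix: in Step 1, take the path to be simple (e.g.\ shortest), so that $z_i \neq y$ for $i<k$ and the triple $(z_{i+1}, z_i, y)$ is distinct with $z_i(y)$ defined.
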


\begin{proof}
\emph{(a) Faction structure.} Saturation applies intra-$C$ $0 \to 1$ moves, each flat (Lemma~\ref{lem:01-flat}) and adding a $1$ to $C \times C$, so no edge is added twice and the process halts within $|C|(|C|-1)$ moves. It halts only at $K_C$: since $C$ is an R-cluster, $\Rmat$ restricted to $C$ is complete by definition, and by Lemma~\ref{lem:scc-path} every $M$-path between two members of $C$ stays inside $C$, so the transitive closure of $M$ restricted to $C \times C$ agrees with $\Rmat$ restricted to $C$ --- also complete. So whenever $M$ on $C \times C$ is not yet $K_C$, it is not yet transitively closed: some triple $p, r, q \in C$ has $p(r) = 1$, $r(q) = 1$, $p(q) = 0$, which is exactly a legal $0 \to 1$ move (speaker $r$, listener $p$, target $q$). The process can always proceed until $M$ on $C \times C$ is $K_C$, at which point trust inside $C$ is transitive by construction.

\emph{$C$ stays inward.} No move makes a $C$-member trust an outsider. A $1 \to 0$ move only removes a trust edge. A $0 \to 1$ move makes a listener $z \in C$ trust some new $y$, copied from its speaker $a$: since $z$ trusts $a$, $a \in C$, and $a$ trusts only $C$, so $y \in C$. Handling another cluster $C'$ never touches a $C$-row: saturating $C'$ will only edit $C'$-rows, and aligning $C'$ will only edit $P(C')$-rows, which contain no $C$-member, since members of an inward $C$ trust no outsiders.

\emph{Peripherals copy the core.} Consider a peripheral $z \in P(C)$ with anchor $a_z \in C$. By the two paragraphs above, $a_z$ trusts exactly $C \setminus \{a_z\}$. Aligning $z$ to $a_z$ overwrites row $z$ with row $a_z$ on every column except $z(a_z)$ and $z(z)$; with the anchor edge $z \to a_z$ still in place, $z$ ends trusting exactly $C$. Distinct peripherals are defined by disjoint rows, so the alignments are independent and their order does not matter.

\emph{No more firing moves are possible.} A faction member trusts only core members, and core members and peripherals alike trust exactly the core. So a faction member and any speaker it trusts agree on every target: each trusts a third agent precisely when it lies in the core, and the firing condition (a disagreement) never arises.

\emph{(b) Factions are disjoint.} Distinct clusters are disjoint, and a core member of $C$ trusts only $C$ by part~(a), so it cannot also be peripheral. Nor can an agent be peripheral to two different cores: suppose $z \in P(C)$ and $z \in P(C')$. Aligning $z$ to $C$ leaves its row trusting only $C$, so by the time $C'$ is aligned $z$ trusts no member of $C'$ and has left $P(C')$. Hence each agent belongs to exactly one faction.
\end{proof}

\begin{figure}[!htb]
\centering
\begin{tikzpicture}[
  >=Stealth, thick,
  every node/.style={circle, draw, minimum size=8mm, font=\small\bfseries, inner sep=0pt, line width=1pt}
]
\fill[factionBlue!12]
  (2.770, 1.810)
  arc (-36.87:121.89:0.65) -- (-0.343, 1.352)
  arc (121.89:180.00:0.65) -- (-0.650, -0.800)
  arc (-180.00:-36.87:0.65) -- cycle;
\draw[factionBlue!70, line width=0.8pt]
  (2.770, 1.810)
  arc (-36.87:121.89:0.65) -- (-0.343, 1.352)
  arc (121.89:180.00:0.65) -- (-0.650, -0.800)
  arc (-180.00:-36.87:0.65) -- cycle;

\node[fill=factionBlue!50, draw=factionBlue] (n1) at (0, 0.8) {1};
\node[fill=factionBlue!50, draw=factionBlue] (n2) at (0, -0.8) {2};
\node[fill=factionGold!30, draw=factionGold!90!black] (n5) at (2.25, 2.2) {5};
\node[fill=factionMagenta!50, draw=factionMagenta] (n3) at (4.5, 0.8) {3};
\node[fill=factionMagenta!50, draw=factionMagenta] (n4) at (4.5, -0.8) {4};
\node[fill=factionTeal!40, draw=factionTeal!90!black] (n6) at (2.25, -2.2) {6};
\node[fill=gray!15, draw=gray!80!black] (n7) at (6.0, -2.2) {7};
\draw[->] (n1) to[bend left=18] (n2);
\draw[->] (n2) to[bend left=18] (n1);
\draw[->] (n5) -- (n1);
\draw[->, green!55!black, very thick, densely dashed] (n5) -- (n2);
\draw[->] (n3) -- (n5);
\draw[->] (n4) -- (n5);
\draw[->] (n6) -- (n3);
\draw[->] (n6) -- (n4);
\draw[->] (n6) -- (n5);
\end{tikzpicture}
\caption{Saturate--align process on the running example. \{1, 2\} happens to already be saturated; agent $5$ has anchor $1$. The alignment sequence for $5$ is a single move setting $5(2) := 1$ (green dashed). After alignment, row $5$ equals row $1$ off the diagonal positions, and $5$ joins the faction as a peripheral member; $F(C) = \{1, 2, 5\}$ becomes silent.}
\label{fig:ex-lemmaA}
\end{figure}

\section{Zipping the active agents into factions}\label{sec:zipping}

After the saturate--align process, the formerly inward factions are locked: their members are silent and can never be listeners again. Each remaining \emph{active} agent will now be \emph{zipped} into a faction it reaches, joining that faction's periphery, as described below.

\begin{definition}[Singleton core; generalized faction]\label{def:single-core}
An agent that trusts no one forms a core by itself, a \emph{singleton core}. From here on a \emph{core} is either a multi-agent cluster locked by Lemma~\ref{lem:post-processing} or a singleton core; either kind trusts nothing outside itself. This matches the companion paper's factions, whose core may be of any size~\cite{absorbing}.
\end{definition}

An active agent never directly trusts a multi-agent core. An active agent's row is never edited during the process, so if it trusts a core member now, it trusted that member already when the cluster was aligned; it was therefore in the periphery at that point (Definition~\ref{def:anchor}), underwent alignment, and is silent --- a contradiction. So an active agent only trusts peripherals or other active agents.

\begin{lemma}[Active agents reach a core]\label{lem:reach-core}
After the process, every active agent reaches a core member of a faction in $\Rmat$.
\end{lemma}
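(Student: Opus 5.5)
We argue by following trust edges forward from an active agent $v$ until we are forced into a core; the forcing comes from the condensation of $\Rmat$ into strongly-connected components being acyclic. Throughout we work in the state left by the saturate--align process, where every cluster is inward (this is the hypothesis under which active agents are defined, Definition~\ref{def:silent}). The moves of the process are all flat except possibly the alignment moves. Rather than track how $\Rmat$ changed, we reason directly about the final matrix $M$ and its closure.

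\textbf{Case 1: $v$ trusts no one.} Then $v$ is a singleton core (Definition~\ref{def:single-core}). We read the lemma's ``reaches a core member'' as the trivial reflexive case here, or simply note that such a $v$ is already a core and needs no zipping. If the convention is strict reachability, we set these agents aside as the base of the faction structure.

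\textbf{Case 2: $v$ trusts someone.} Walk forward along trust edges $v = x_0 \to x_1 \to \cdots$, always choosing a trusted agent, until we hit an agent that is either silent or trusts no one. This walk must stop. Suppose instead that it revisits a vertex while every visited vertex is active and trusts someone. Then the revisited segment is a directed cycle among active agents, so those agents lie in a common cluster $C$ of the final $M$. The key claim is that this is impossible, and it is the step I expect to be the main obstacle.

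The idea for the key claim is as follows. Clusters of the final $M$ are clusters of the state at the start of the process together with anything created by the process. Saturation is flat (Lemma~\ref{lem:01-flat}, Lemma~\ref{lem:R}), and an alignment move only rewrites a peripheral's row to trust exactly its core, which creates no cycle through active agents. Moreover, no row of an active agent is ever edited. Hence any cycle among active agents already existed at the start, so those agents belonged to an original cluster. But every original cluster was saturated and became a core, making its members silent. This contradicts activity.

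Once the walk stops, there are two ways it can end. If it stops at an agent trusting no one, that agent is a singleton core reached by $v$, and we are done. If it stops at a silent agent $s$, then $s \in F(C)$ for some locked faction. Either $s \in C$, or $s$ is a peripheral, which by Lemma~\ref{lem:post-processing}(a) trusts exactly $C$. In both cases $v$ reaches a member of the core $C$ in $\Rmat$.
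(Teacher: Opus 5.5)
Your proof is correct and follows the paper's argument: you walk forward along trust edges, rule out a cycle of active agents because it would form a cluster that the process silences, and end either at a trustless agent (a singleton core, with the same reflexive reading the paper uses) or at a silent agent that is a core member or trusts exactly its core. Your acyclicity step is more careful than the paper's bare assertion, since you note that such a cycle uses only active rows, which the process never edits, so it predates the process. Stopping at the first silent agent also means you do not need the remark that active agents never directly trust a multi-agent core.
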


\begin{proof}
Every cluster was silenced into a faction, so no active agent lies in one; two active agents that were mutually reachable would share a cluster, so distinct active agents are not mutually reachable and form a DAG in $\Rmat$. Starting from any active agent $z$, follow trust edges through active agents; the DAG has no cycle, so the walk halts at an active agent $a$ that trusts no other active agent. By the remark above $a$ trusts only peripherals, or no one. If no one, $a$ is itself a singleton core. Otherwise $a$ trusts a peripheral of a faction with core $C$, and that peripheral trusts the members of $C$, so active agent $z$ reaches a core member of $C$ in $R$.
\end{proof}

\begin{lemma}[Zipping]\label{lem:zipping}
After the saturate--align process (Lemma~\ref{lem:post-processing}), either
\begin{itemize}
\item[(a)] every active agent trusts no one, or trusts exactly one agent that itself trusts no one; then $M$ is absorbing; or
\item[(b)] otherwise, some active agent trusts multiple agents, and a finite sequence of firing moves makes this active agent a peripheral of a core and strictly decreases $|R(M)|$.
\end{itemize}
\end{lemma}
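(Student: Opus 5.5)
We first settle case (a) directly and then reduce everything else to one construction: pick an active agent $z$ that trusts at least two agents, make it trust a core member $c$ by flat moves, and then align $z$ to $c$. The alignment deletes at least one of $z$'s old reachabilities, which is where the strict drop comes from.

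\emph{Case (a).} We check that no firing move exists. Silent agents are never listeners, by Lemma~\ref{lem:post-processing}(a). An active agent that trusts no one cannot be a listener either, since the firing condition needs $z(a)=1$. Now let an active $z$ trust exactly one agent $b$, where $b$ trusts no one. The only possible speaker is $b$. For every target $y \notin \{z,b\}$ we have $z(y)=0=b(y)$, so there is no disagreement and nothing fires. Hence $M$ is absorbing.

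\emph{Reduction to an agent trusting several others.} Suppose (a) fails and no active agent trusts several agents. Then some active $z$ trusts exactly one agent $b$, and $b$ trusts some $x$. The $0\to1$ move $\gos(M, b(x), z)$ fires: it is legal because $x \neq z$, as otherwise $z$ and $b$ would be mutually reachable and hence in a cluster. By Lemma~\ref{lem:01-flat} this move is flat, and afterwards $z$ trusts several agents. So at the cost of one flat move we may assume some active $z$ trusts at least two agents. This is the reading of ``otherwise'' that I would make explicit in the proof.

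\emph{Construction for (b).} Fix an active $z$ that trusts at least two agents, and apply Lemma~\ref{lem:reach-core} to obtain a core member $c$ reached by $z$. The proof of that lemma gives a concrete path $z=z_0\to z_1\to\cdots\to z_k\to c$. All $z_i$ with $i<k$ are active, and the last vertex before $c$ is either an active agent (when $c$ is a singleton core) or a peripheral of $C\ni c$. A peripheral of $C$ already trusts every member of $C$ by Lemma~\ref{lem:post-processing}(a).

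As in Lemma~\ref{lem:trustee}, I would walk the opinion of $c$ backward along this path. Each step is a flat $0\to1$ move whose listener is an active agent, so no silent row is ever edited and the locked factions stay intact. At the end $z(c)=1$. Next I would run the alignment sequence for $z$ with anchor $c$. Afterwards $z$'s row equals $c$'s row off the diagonal, so $z$ trusts exactly the core $C$ of $c$ (just $\{c\}$ when $C$ is a singleton). Thus $z$ has become a peripheral of $C$ and can never again be a listener.

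\emph{Strict decrease.} Originally $z$ trusted at least two agents, so it trusted some $x\neq c$. I claim $x\notin C$. If $C$ is a singleton this is immediate. If $C$ is a multi-agent core, the remark before Lemma~\ref{lem:reach-core} says an active agent trusts no multi-agent core member; I will check that this remark survives the flat walk, using the fact that the walk only copies trust of $c$. So $\Rmat_{zx}=1$ at the start, and none of the flat moves changes $\Rmat$, by Lemma~\ref{lem:R}. After alignment, $z$ trusts only $C$, and $C$ reaches only itself because a core trusts nothing outside itself. Hence $\Rmat_{zx}=0$ at the end. Combined with Corollary~\ref{cor:R-monotone}, this gives a strict drop in $|R(M)|$.

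\emph{Main obstacle.} The delicate step is the bookkeeping in the middle: showing that the backward walk and the alignment never require a silent agent as listener, and that the chosen $x$ really lies outside $C$. I would handle both by following the explicit path from Lemma~\ref{lem:reach-core} and using Lemma~\ref{lem:post-processing}(a).
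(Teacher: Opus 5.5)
\emph{Verdict: your proposal follows the paper's route, but the strict-decrease step has a real, easily repaired gap in the branch where you apply your preliminary reduction move.}

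The route is the paper's: you reach a core member $c$ by flat $0\to1$ moves, align $z$ to $c$, and read off the drop from a reachability of $z$ that is lost. Three things differ, none of them essential.
\begin{itemize}
\item You check (a) directly instead of citing the companion paper, which is cleaner.
\item You propagate trust of $c$ backward along the path, which edits the rows of the intermediate active agents. The paper's forward ``adopt up'' edits only row $z$, which keeps the bookkeeping simpler.
\item You correctly notice that the wording of (b) does not literally cover an active agent trusting exactly one agent who trusts someone. The paper's proof handles that subcase alongside the multi-trustee one, rather than by a preliminary move.
\end{itemize}

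\textbf{The gap.} That preliminary move breaks your strict-decrease step. Suppose the lone trustee $b$ of $z$ is a peripheral of a multi-agent core $C'$.
\begin{itemize}
\item The target $x$ of $\gos(M,b(x),z)$ then lies in $C'$, so afterwards $z$ directly trusts a core member. The remark before Lemma~\ref{lem:reach-core} no longer applies to $z$.
\item Every core member $z$ reaches now lies in $C'$. Since $|C'|\geq 2$, Lemma~\ref{lem:reach-core} may give you a $c\in C'$ with $c\neq x$.
\item Then $x$ is a trustee of $z$ with $x\neq c$, yet $x\in C$, and $\Rmat_{zx}=1$ still holds after zipping. Your claim ``$x\notin C$'' fails for this choice of witness.
\end{itemize}
Concretely, take core $\{1,2\}$, a peripheral $3$ trusting $1$ and $2$, and an active $4$ trusting only $3$. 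Your reduction sets $4(1):=1$. The path $4\to 3\to 2$ gives $c=2$. The witness $1\neq c$ never loses reachability; only $\Rmat_{43}$ actually drops.

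Your plan to check that the remark ``survives the flat walk'' targets the wrong move. It is also false as stated, since your walk makes $z$ and every $z_i$ trust $c$. What matters is only the state in which the witness was chosen.

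\textbf{The fix.} Choose the witness as a trustee of $z$ from before any move, for instance $b$ itself. It is not in $C$ for two reasons:
\begin{itemize}
\item $b$ trusts someone, so it is not a singleton core.
\item $b$ was trusted by an active agent, so it is not a multi-agent core member.
\end{itemize}
Simpler still, drop the reduction entirely. The argument needs only one trustee of $z$ outside $C$, and the single-trustee subcase already supplies $b$, exactly as in the paper's progress step.
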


\begin{proof}
\emph{(a)} An active agent that trusts no one is a singleton core; one that trusts a single trustless agent $c$ is a peripheral of the singleton core $\{c\}$. Together with the silent agents, each trusting exactly its core by Lemma~\ref{lem:post-processing}(a), every agent now trusts exactly the core of its faction (no-one, for a singleton core). Such a state is absorbing~\cite{absorbing}.

\emph{(b)} Some active agent $z$ falls outside case (a): it trusts two or more agents, or a single agent that itself trusts someone. In neither case does $z$ trust exactly a core: trusting exactly a multi-agent core would make it a silent peripheral, so by Lemma~\ref{lem:reach-core} agent $z$ reaches a core member $t$; fix one, and let $T$ be its cluster. Reachability means there is a directed $M$-path $z \to z_1 \to \cdots \to t$.

\emph{Adopt up.} Walk the path forward, adopting trust: whenever $z$ trusts $z_i$ and $z_i$ trusts the next vertex $z_{i+1}$ but $z$ does not yet trust $z_{i+1}$, fire $\gos(M, z_i(z_{i+1}), z)$ --- a $0 \to 1$ move, flat by Lemma~\ref{lem:01-flat}, setting $z(z_{i+1}) := 1$. After finitely many such moves $z$ directly trusts the core member $t$.

\emph{Align.} Run the alignment sequence for $z$ with anchor $t$ (Definition~\ref{def:alignment}). A core trusts only its other members, so $t$ trusts exactly $T \setminus \{t\}$; alignment overwrites row $z$ to match $t$, giving $z(y) = 1 \iff y \in T$. Agent $z$ now trusts every member of $T$, and no longer trusts anyone outside $T$, and is therefore a peripheral of $T$.

\emph{Progress.} Because $z$ did not already trust exactly $T$, it previously reached at least one agent outside $T$ (a vertex on the path, or a surplus trustee). After zipping, $z$ trusts only $T$, and a core reaches only itself, so $z$ now reaches exactly $T$. The reachabilities from $z$ to those outside agents fall to $0$, so $|R(M)|$ strictly decreases.

Adoption and alignment edit only row $z$, so every multi-agent faction stays locked (Lemma~\ref{lem:post-processing}(a)) and no core member comes to trust $z$; thus $z$ joins $T$'s periphery without creating a new cluster. Figure~\ref{fig:zip} shows the zipping of one agent, and Figure~\ref{fig:ex-zip} completes the running example.
\end{proof}

\begin{figure}[!htb]
\centering
\begin{minipage}[t]{0.46\textwidth}
\centering
\begin{tikzpicture}[
  >=Stealth, thick,
  every node/.style={circle, draw, minimum size=8mm, font=\small\bfseries, inner sep=0pt, line width=1pt}
]
\fill[factionBlue!12, rounded corners=8pt] (3.4, -1.3) rectangle (4.6, 1.3);
\draw[draw=factionBlue!70, line width=0.8pt, rounded corners=8pt] (3.4, -1.3) rectangle (4.6, 1.3);
\node[fill=factionBlue!50, draw=factionBlue] (c1) at (4, 0.65) {$c_1$};
\node[fill=factionBlue!50, draw=factionBlue] (c2) at (4, -0.65) {$c_2$};
\node[fill=factionGold!30, draw=factionGold!90!black] (p) at (2, 0) {$p$};
\node[fill=factionTeal!35, draw=factionTeal!90!black] (z) at (0, 0) {$z$};
\draw[->] (c1) to[bend left=18] (c2);
\draw[->] (c2) to[bend left=18] (c1);
\draw[->] (p) -- (c1);
\draw[->] (p) -- (c2);
\draw[->] (z) -- (p);
\end{tikzpicture}
\\[0.5em]
{\small (a) Before. Active $z$ (teal) trusts a peripheral $p$ (gold) of the core $C = \{c_1, c_2\}$ (blue), reaching the core only through $p$.}
\end{minipage}\hfill
\begin{minipage}[t]{0.46\textwidth}
\centering
\begin{tikzpicture}[
  >=Stealth, thick,
  every node/.style={circle, draw, minimum size=8mm, font=\small\bfseries, inner sep=0pt, line width=1pt}
]
\fill[factionBlue!12, rounded corners=8pt] (3.4, -1.3) rectangle (4.6, 1.3);
\draw[draw=factionBlue!70, line width=0.8pt, rounded corners=8pt] (3.4, -1.3) rectangle (4.6, 1.3);
\node[fill=factionBlue!50, draw=factionBlue] (c1) at (4, 0.65) {$c_1$};
\node[fill=factionBlue!50, draw=factionBlue] (c2) at (4, -0.65) {$c_2$};
\node[fill=factionGold!30, draw=factionGold!90!black] (p) at (2, 0) {$p$};
\node[fill=factionTeal!35, draw=factionTeal!90!black] (z) at (0, 0) {$z$};
\draw[->] (c1) to[bend left=18] (c2);
\draw[->] (c2) to[bend left=18] (c1);
\draw[->] (p) -- (c1);
\draw[->] (p) -- (c2);
\draw[->, green!55!black, very thick, densely dashed] (z) to[bend left=22] (c1);
\draw[->, green!55!black, very thick, densely dashed] (z) to[bend right=22] (c2);
\end{tikzpicture}
\\[0.5em]
{\small (b) After. $z$ has adopted trust up to $c_1, c_2$ and been aligned, so it trusts exactly the core (green): a new peripheral. Its edge to $p$ is gone, so it no longer reaches $p$.}
\end{minipage}
\caption{Zipping an active agent into a faction (Lemma~\ref{lem:zipping}). $z$ adopts the trust of agents along a path it reaches until it directly trusts a core member (flat $0 \to 1$ moves), then is aligned onto that core anchor, overwriting its row to trust exactly the core. The reachability $\Rmat_{zp}$ is destroyed, so $|R|$ drops.}
\label{fig:zip}
\end{figure}

\begin{figure}[!htb]
\centering
\begin{minipage}[t]{0.48\textwidth}
\centering
\begin{tikzpicture}[
  scale=0.72, transform shape,
  >=Stealth, thick,
  every node/.style={circle, draw, minimum size=8mm, font=\small\bfseries, inner sep=0pt, line width=1pt}
]
\fill[factionBlue!12]
  (-0.650, -0.800)
  arc (-180.00:-70.43:0.65) -- (4.718, 0.188)
  arc (-70.43:58.11:0.65) -- (2.593, 2.752)
  arc (58.11:121.89:0.65) -- (-0.343, 1.352)
  arc (121.89:180.00:0.65) -- cycle;
\draw[factionBlue!70, line width=0.8pt]
  (-0.650, -0.800)
  arc (-180.00:-70.43:0.65) -- (4.718, 0.188)
  arc (-70.43:58.11:0.65) -- (2.593, 2.752)
  arc (58.11:121.89:0.65) -- (-0.343, 1.352)
  arc (121.89:180.00:0.65) -- cycle;
\node[fill=factionBlue!50, draw=factionBlue] (n1) at (0, 0.8) {1};
\node[fill=factionBlue!50, draw=factionBlue] (n2) at (0, -0.8) {2};
\node[fill=factionGold!30, draw=factionGold!90!black] (n5) at (2.25, 2.2) {5};
\node[fill=factionMagenta!50, draw=factionMagenta] (n3) at (4.5, 0.8) {3};
\node[fill=factionMagenta!50, draw=factionMagenta] (n4) at (4.5, -0.8) {4};
\node[fill=factionTeal!40, draw=factionTeal!90!black] (n6) at (2.25, -2.2) {6};
\node[fill=gray!15, draw=gray!80!black] (n7) at (6.0, -2.2) {7};
\draw[->] (n1) to[bend left=18] (n2);
\draw[->] (n2) to[bend left=18] (n1);
\draw[->] (n5) -- (n1);
\draw[->] (n5) -- (n2);
\draw[->, green!55!black, very thick, densely dashed] (n3) -- (n1);
\draw[->, green!55!black, very thick, densely dashed] (n3) -- (n2);
\draw[->, red!75!black, very thick, densely dashed] (n3) -- (n5);
\draw[->] (n4) -- (n5);
\draw[->] (n6) -- (n3);
\draw[->] (n6) -- (n4);
\draw[->] (n6) -- (n5);
\end{tikzpicture}
\\[0.4em]
{\small (a) Zipping up agent $3$.}
\end{minipage}\hfill
\begin{minipage}[t]{0.48\textwidth}
\centering
\begin{tikzpicture}[
  scale=0.72, transform shape,
  >=Stealth, thick,
  every node/.style={circle, draw, minimum size=8mm, font=\small\bfseries, inner sep=0pt, line width=1pt}
]
\fill[factionBlue!12]
  (5.150, 0.800)
  arc (0.00:58.11:0.65) -- (2.593, 2.752)
  arc (58.11:121.89:0.65) -- (-0.343, 1.352)
  arc (121.89:180.00:0.65) -- (-0.650, -0.800)
  arc (-180.00:-121.89:0.65) -- (1.907, -2.752)
  arc (-121.89:-58.11:0.65) -- (4.843, -1.352)
  arc (-58.11:0.00:0.65) -- cycle;
\draw[factionBlue!70, line width=0.8pt]
  (5.150, 0.800)
  arc (0.00:58.11:0.65) -- (2.593, 2.752)
  arc (58.11:121.89:0.65) -- (-0.343, 1.352)
  arc (121.89:180.00:0.65) -- (-0.650, -0.800)
  arc (-180.00:-121.89:0.65) -- (1.907, -2.752)
  arc (-121.89:-58.11:0.65) -- (4.843, -1.352)
  arc (-58.11:0.00:0.65) -- cycle;
\node[fill=factionBlue!50, draw=factionBlue] (n1) at (0, 0.8) {1};
\node[fill=factionBlue!50, draw=factionBlue] (n2) at (0, -0.8) {2};
\node[fill=factionGold!30, draw=factionGold!90!black] (n5) at (2.25, 2.2) {5};
\node[fill=factionMagenta!50, draw=factionMagenta] (n3) at (4.5, 0.8) {3};
\node[fill=factionMagenta!50, draw=factionMagenta] (n4) at (4.5, -0.8) {4};
\node[fill=factionTeal!40, draw=factionTeal!90!black] (n6) at (2.25, -2.2) {6};
\node[fill=gray!15, draw=gray!80!black] (n7) at (6.0, -2.2) {7};
\draw[->] (n1) to[bend left=18] (n2);
\draw[->] (n2) to[bend left=18] (n1);
\draw[->] (n5) -- (n1);
\draw[->] (n5) -- (n2);
\draw[->] (n3) -- (n1);
\draw[->] (n3) -- (n2);
\draw[->] (n4) -- (n1);
\draw[->] (n4) -- (n2);
\draw[->] (n6) -- (n1);
\draw[->] (n6) -- (n2);
\end{tikzpicture}
\\[0.4em]
{\small (b) The absorbing state.}
\end{minipage}
\caption{Zipping process. (a) After saturate--align, $5$ is a peripheral of $\{1, 2\}$ and agent $3$ is active, trusting $5$. Adopt up makes $3$ trust the core directly (green dashed, $0 \to 1$, flat); aligning $3$ onto anchor $1$ then removes $3(5)$ (red dashed), so $3$ trusts exactly $\{1, 2\}$ and joins the periphery, dropping $|R|$. (b) Zipping $4$ and $6$ in the same way, every agent trusts exactly the core $\{1, 2\}$ except $7$, which trusts no one and is a singleton core; no firing move remains, so the state is absorbing.}
\label{fig:ex-zip}
\end{figure}


\section{Descent and termination}\label{sec:descent}

\begin{theorem}[Descent claim]\label{thm:descent}
From every non-absorbing state $M$, there exists a finite sequence of firing moves reaching either an absorbing state, or a state $M'$ with $\Rsize{M'} < \Rsize{M}$.
\end{theorem}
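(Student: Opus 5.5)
The plan is a case split on the cluster structure of $\Rmat(M)$, combining the lemmas of Sections~\ref{sec:outward}--\ref{sec:zipping}. Throughout I use that all intermediate moves are firing moves. By Corollary~\ref{cor:R-monotone}, $|R|$ never increases along any run, so it suffices to find one move in the run where $|R|$ strictly drops, or to end at an absorbing state.

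\emph{Case 1: $M$ has an outward cluster $C$.} First I apply Lemma~\ref{lem:trustee} to obtain a common trustee $w$ through flat $0\to1$ moves. By Lemma~\ref{lem:R} and Corollary~\ref{cor:clusters}, $C$ is still a cluster afterwards and $|R|$ is unchanged. Then Lemma~\ref{lem:B} dissolves $C$ and lowers $|R|$ by at least $2$. The resulting $M'$ has $\Rsize{M'}<\Rsize{M}$.

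\emph{Case 2: every cluster is inward.} I run the saturate--align process.

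\begin{itemize}
\item Saturation consists of flat moves.
\item Alignment may contain $1\to0$ moves. If any of them is a progress move, the descent claim is already met at that point and I stop.
\item Otherwise the whole process is flat, so $\Rmat$ and the clusters are unchanged. Lemma~\ref{lem:post-processing} then applies: its hypothesis that every cluster is inward is preserved.
\end{itemize}

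Next I invoke Lemma~\ref{lem:zipping}. In alternative (a) the state is absorbing, which is the first outcome of the theorem. In alternative (b) zipping strictly decreases $|R|$, and monotonicity of every earlier step gives $\Rsize{M'}<\Rsize{M}$.

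\emph{Main obstacle: the dichotomy in Lemma~\ref{lem:zipping}.} I need to check that whenever (a) fails there is an active agent to zip, and that its zipping genuinely loses a reachability. The lemma's proof handles an agent trusting one agent that itself trusts someone. If that trustee is a peripheral $p$ of core $T$, then after zipping $z$ trusts exactly $T$. Before zipping it reached $p\notin T$, so $\Rmat_{zp}$ is lost.

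The step I would double-check is the interleaving in Case 2. Lemma~\ref{lem:post-processing} is stated for a complete run, while I am stopping early at the first progress move. Stopping early is harmless, because the theorem only asks for existence of a finite run ending in lower $|R|$. If no early stop occurs, Lemma~\ref{lem:R} guarantees that the inward hypothesis, and hence the locked-faction structure, holds when zipping begins.

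Finally, $M$ non-absorbing is not needed beyond ensuring the statement is non-vacuous. If the process happens to land in alternative (a), we reach an absorbing state, which the theorem allows.
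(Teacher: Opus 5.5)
Your proposal is correct and follows the paper's proof essentially step for step: an outward cluster gets a common trustee by flat moves and is then dissolved, while in the all-inward case you run saturate--align and then either zip an active agent (strict drop) or land in the absorbing alternative of Lemma~\ref{lem:zipping}. Your explicit early stop at a progress alignment move, and your check of the single-trustee sub-case of Lemma~\ref{lem:zipping}(b), make precise two points that the paper's proof only glosses.
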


\begin{proof}
Either some available move at $M$ is progress, which decreases $\Rsize{M}$, or every available move is flat, so $\Rmat$ is constant on the flat-closure (Lemma~\ref{lem:R}) and the cluster structure is fixed (Corollary~\ref{cor:clusters}).

If some cluster is outward, manufacture a common trustee by flat moves (Lemma~\ref{lem:trustee}), then dissolve the cluster (Lemma~\ref{lem:B}), strictly lowering $|R(M)|$.

Otherwise every cluster is inward. Run the saturate--align process (Lemma~\ref{lem:post-processing}); its moves are flat except possibly for a $1 \to 0$ alignment step that lowers $|R(M)|$, which locks the multi-agent factions. If some active agent can still be zipped, do so (Lemma~\ref{lem:zipping}(b)): flat adoptions then an alignment which lowers $|R(M)|$. Otherwise $M$ is absorbing (Lemma~\ref{lem:zipping}(a)).
\end{proof}

\begin{corollary}[Termination]\label{thm:final-termination}
From every non-absorbing state $M$, there exists a finite sequence of firing moves leading to an absorbing state.
\end{corollary}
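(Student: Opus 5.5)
The corollary follows from Theorem~\ref{thm:descent} by well-founded induction on the non-negative integer $\Rsize{M}$. The plan is to build the finite path by repeatedly applying the descent claim and concatenating the resulting runs. Each application either ends the construction at an absorbing state or strictly lowers $|R|$, and $|R|$ can only be lowered finitely often.

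Concretely, I will prove by strong induction on $k \in \{0,1,\ldots,n(n-1)\}$ the statement $P(k)$: every non-absorbing $M$ with $\Rsize{M} = k$ admits a finite sequence of firing moves ending at an absorbing state. Fix such an $M$ and apply Theorem~\ref{thm:descent}. This yields a finite run $M = M_0 \to M_1 \to \cdots \to M_r$ of firing moves with two possible outcomes.
\begin{itemize}
\item[(i)] If $M_r$ is absorbing, we are done.
\item[(ii)] Otherwise $\Rsize{M_r} < k$. If $M_r$ is absorbing we again stop. If not, the induction hypothesis $P(\Rsize{M_r})$ supplies a finite run from $M_r$ to an absorbing state, and appending it to $M_0 \to \cdots \to M_r$ gives the required finite sequence.
\end{itemize}

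The base case needs care. When $k = 0$, $M$ has no trust edges at all, since any edge $z(y)=1$ gives $\Rmat_{zy}=1$. Then no move fires, because every firing move needs $z(a)=1$, so $M$ is absorbing and $P(0)$ holds vacuously. It is cleaner, though, not to rely on a separate base case: strong induction covers it, since Theorem~\ref{thm:descent} cannot return a state with $|R|<0$, so at $k=0$ it must return an absorbing state.

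The one step I will check carefully is that Corollary~\ref{cor:R-monotone} is not even needed for termination of this construction. Each invocation of the theorem either terminates or lands strictly lower in $|R|$, and intermediate states inside a run never matter for the count. So the total number of invocations is at most $n(n-1)+1$, and the concatenated sequence is finite, being a finite concatenation of finite runs. I do not expect any real obstacle beyond this bookkeeping; the substance lives in Theorem~\ref{thm:descent}.
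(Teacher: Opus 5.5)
Your proposal is correct and follows the paper's own argument: both iterate Theorem~\ref{thm:descent}, using that $|R(M)|$ is a non-negative integer at most $n(n-1)$ and that $|R(M)| = 0$ forces the empty (hence absorbing) trust matrix, and your strong induction on $|R(M)|$ is a formalized version of the paper's ``apply repeatedly'' step. One cosmetic point: the theorem is only invoked at non-absorbing states, which have $|R(M)| \geq 1$, so at most $n(n-1)$ invocations are needed rather than $n(n-1)+1$, though your looser bound is harmless.
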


\begin{proof}
Apply Theorem~\ref{thm:descent} repeatedly. Each application either reaches an absorbing state or strictly drops $|R(M)|$. Since $|R(M)|$ is a non-negative integer bounded above by $n(n-1)$, and $|R(M)| = 0$ forces $M$ to be the empty trust matrix (itself absorbing), the procedure halts at an absorbing state after at most $n(n-1)$ iterations.
\end{proof}


\section{No limit cycles under fairness}\label{sec:fairness}

A \emph{firing schedule} is a sequence of firing-move choices, one per time step, where each chosen move is available at the corresponding state. A firing schedule is \emph{fair} if, whenever a state $M$ occurs infinitely often, every firing move available at $M$ is fired from $M$ infinitely often. Equivalently, a move cannot remain available at a recurring state yet never be fired.

\begin{theorem}[No limit cycles]\label{thm:no-cycles}
Under any fair firing schedule, every trajectory of the binary trust-gossip dynamics reaches an absorbing state in finitely many steps.
\end{theorem}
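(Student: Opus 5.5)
The plan is to combine the existence result of Corollary~\ref{thm:final-termination} with the finiteness of the state space and the fairness hypothesis, via a standard pigeonhole-and-induction argument. The state space is finite: there are $2^{n(n-1)}$ matrices. Suppose, for contradiction, that some fair trajectory never reaches an absorbing state. Absorbing states have no available firing move, so once reached the trajectory stops. Hence the trajectory is infinite and visits only non-absorbing states. Since the space is finite, some nonempty set $S$ of states occurs infinitely often. Moreover, after some finite time every state visited lies in $S$, because states outside $S$ occur only finitely often.

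The key step is to show that $S$ is closed under firing moves. Take $M \in S$ and any move available at $M$, with post-state $M'$. Fairness says this move is fired from $M$ infinitely often. Each such firing places the trajectory at $M'$ at the next step, so $M'$ occurs infinitely often and $M' \in S$.

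By Corollary~\ref{thm:final-termination}, from any $M \in S$ there is a finite sequence of firing moves $M = M_0 \to M_1 \to \cdots \to M_k$ with $M_k$ absorbing. Inducting along this sequence with the closure property gives $M_1, \ldots, M_k \in S$. So the absorbing state $M_k$ occurs infinitely often, which is impossible because the trajectory halts there. This contradiction shows that every fair trajectory reaches an absorbing state, and therefore in finitely many steps.

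I do not expect a real obstacle, since all the combinatorial work sits in Theorem~\ref{thm:descent}. The point needing most care is the bookkeeping in the closure step: fairness is only stated for states occurring infinitely often, so the induction must stay inside $S$ at each stage, and this is exactly what the closure property provides. As a remark, the argument would also give a bound: the absorbing state is reached once every state has been visited at most a bounded number of times, though no explicit bound is claimed.
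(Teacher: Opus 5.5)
Your argument is correct and is essentially the paper's: the paper takes an infinitely-recurring state $N$ minimizing the distance $d(N)$ to absorption and uses fairness to make the next state on a shortest absorbing run recur too, which is your closure-of-$S$ induction along the path from Corollary~\ref{thm:final-termination} phrased as a minimal counterexample. Drop your closing remark about a bound, though: fairness constrains only infinite behaviour, and the state graph does contain cycles (e.g.\ a listener $z$ trusting two agents who disagree about $y$ can have $z(y)$ toggled back and forth by flat moves), so a fair schedule may loop for any finite number of steps before escaping, and no bound on visits or steps follows.
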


\begin{proof}
From every non-absorbing state an absorbing state is finitely many moves away (Corollary~\ref{thm:final-termination}); write $d(M)$ for the fewest moves in which $M$ reaches one.

Suppose towards contradiction that some fair trajectory never absorbs. The state space is finite, so some state recurs at infinitely many steps; among those, consider $N$ with $d(N)$ smallest. Since the trajectory never absorbs, $N$ is not absorbing, so $d(N) \geq 1$. The first move of a shortest run from $N$ leads to a state $N'$ with $d(N') = d(N) - 1$. That move is available whenever $N$ occurs, hence at infinitely many steps, so by fairness it fires from $N$ infinitely often, each time sending the trajectory to $N'$; thus $N'$ recurs infinitely often. If $d(N) = 1$, then $N'$ is absorbing, so the trajectory reaches an absorbing state, contradicting the assumption that it never absorbs. If $d(N) > 1$, then $N'$ is a non-absorbing recurring state with $d(N') < d(N)$, contradicting the minimality of $d(N)$.
\end{proof}


\section{Discussion}\label{sec:discussion}

Theorem~\ref{thm:no-cycles} extends the companion paper's empirical observation (no limit cycles for $n \leq 4$) to all $n$. The proof is constructive, with an explicit per-iteration cost: each descent step takes $O(n^2)$ moves: in the outward case a backward propagation followed by a dissolution, in the all-inward case the saturate--align process followed by zipping an active agent (an adoption walk plus an alignment, each $O(n)$ moves), and the outer loop runs at most $n(n-1)$ times, giving an overall bound of $O(n^4)$ moves to absorbing.

Combined with the Characterization Theorem of \cite{absorbing}, every initial state reaches one of the plane-partition-counted absorbing states.

Because a gossip move overwrites rather than adds, the dynamics are not abelian: unlike the abelian chip-firing and sandpile systems, where each starting state has one destination, termination here does not come with a unique endpoint. We prove that every trajectory halts, but which absorbing state it reaches depends on the order of gossip moves.

Three questions remain open.
\begin{enumerate}
\item \emph{A tighter bound.} From any non-absorbing state, a short run of flat moves reaches a state that is absorbing or has a progress move available. By exhaustive enumeration for $n \leq 5$ and sampling for $n = 6$, the longest such run needed is $1, 2, 4, 6$ moves at $n = 3, 4, 5, 6$. So the $O(n^2)$-per-phase factor in the $O(n^4)$ bound is likely too high and the true bound may be $O(n^3)$ or better; a proof would track which edges turn from flat to progress in each phase.
\item \emph{Basins of attraction.} Which initial states reach which absorbing state, and with what probability under a given schedule?
\item \emph{Other dynamics.} Does the argument carry over to higher-order opinions and to the other gossip types of \cite{absorbing}?
\end{enumerate}


\section*{Acknowledgements}

Formatted and illustrated using Overleaf.

No competing interests to declare.

This research did not receive any specific grant from funding agencies
in the public, commercial, or not-for-profit sectors.

\medskip
Use of AI tools: Anthropic Claude Fable 5, Opus 4.8 and Sonnet 5 were used
for assistance with drafting sections, conducting literature searches,
and drawing figures. All mathematical content was verified independently
by the author.


\begin{thebibliography}{99}

\bibitem{absorbing}
N. Boichuk, 
\emph{Absorbing states of binary trust gossip are counted by plane partitions}.
Preprint, 2026. \texttt{arXiv:2605.26792}.

\bibitem{bls} A.~Bj\"orner, L.~Lov\'asz, and P.~W. Shor,
\emph{Chip-firing games on graphs},
European J. Combin. \textbf{12} (1991), no.~4, 283--291.

\bibitem{bl-digraph} A.~Bj\"orner and L.~Lov\'asz,
\emph{Chip-firing games on directed graphs},
J. Algebraic Combin. \textbf{1} (1992), no.~4, 305--328.

\bibitem{dhar} D.~Dhar,
\emph{Self-organized critical state of sandpile automaton models},
Phys. Rev. Lett. \textbf{64} (1990), no.~14, 1613--1616.

\bibitem{bond-levine} B.~Bond and L.~Levine,
\emph{Abelian networks I: foundations and examples},
SIAM J. Discrete Math. \textbf{30} (2016), no.~2, 856--874.

\bibitem{degroot} M.~H. DeGroot,
\emph{Reaching a consensus},
J. Amer. Statist. Assoc. \textbf{69} (1974), no.~345, 118--121.

\bibitem{hk} R.~Hegselmann and U.~Krause,
\emph{Opinion dynamics and bounded confidence: models, analysis and simulation},
J. Artificial Societies and Social Simulation \textbf{5} (2002), no.~3.

\bibitem{clifford-sudbury} P.~Clifford and A.~Sudbury,
\emph{A model for spatial conflict},
Biometrika \textbf{60} (1973), no.~3, 581--588.

\bibitem{holley-liggett} R.~A. Holley and T.~M. Liggett,
\emph{Ergodic theorems for weakly interacting infinite systems and the voter model},
Ann. Probab. \textbf{3} (1975), no.~4, 643--663.

\bibitem{boyd} S.~Boyd, A.~Ghosh, B.~Prabhakar, and D.~Shah,
\emph{Randomized gossip algorithms},
IEEE Trans. Inform. Theory \textbf{52} (2006), no.~6, 2508--2530.

\bibitem{jia} P.~Jia, A.~MirTabatabaei, N.~E. Friedkin, and F.~Bullo,
\emph{Opinion dynamics and the evolution of social power in influence networks},
SIAM Rev. \textbf{57} (2015), no.~3, 367--397.


\end{thebibliography}
\end{document}